\newtheorem{Theorem}{Theorem}[section]
\newtheorem{Lemma}[Theorem]{Lemma}
\newtheorem{Corollary}[Theorem]{Corollary}
\newtheorem{Proposition}[Theorem]{Proposition}
\newtheorem{Definition}[Theorem]{Definition}
\newtheorem{Remark}[Theorem]{Remark}
\numberwithin{equation}{section}
\def \dim{{\mbox {dim}}\,}
\def\V{\mbox{Var}}
\def\R\re
\def\V{\bf V}
\def \re{{\mathbb R}}
\def \mR{{\mathbb R}}
\def \C{{\mathbb C}}
\def \Cm{{\mathbb C}}
\def \V{{\bf V}}
\def \A{{\mathbb A}}
\def \B{{\mathbb B}}
\newcommand{\id}{\mathrm{Id}}
\def \G{\widetilde{G}}
\def \g{{\mathfrak g}}
\newcounter{sidenote}
\begin{document}

\title{The non-Abelian X-ray transform on surfaces}

\author[G.P. Paternain]{Gabriel P. Paternain}
\address{ Department of Pure Mathematics and Mathematical Statistics,
University of Cambridge,
Cambridge CB3 0WB, UK}
\email {g.p.paternain@dpmms.cam.ac.uk}

\author[M. Salo]{Mikko Salo}
\address{Department of Mathematics and Statistics, University of Jyv\"askyl\"a}
\email{mikko.j.salo@jyu.fi}



\begin{abstract} This paper settles the question of injectivity of the non-Abelian X-ray transform on simple surfaces for the general linear group of invertible complex matrices. The main idea is to use a factorization theorem for Loop Groups to reduce to the setting of the unitary group, where energy methods and scalar holomorphic integrating factors can be used. We also show that our main theorem extends to cover the case of an arbitrary Lie group.
\end{abstract}

\maketitle

\section{Introduction}

Given a matrix-valued function $\A$ on a bounded domain $M$ with boundary and a curve $\gamma:[a,b]\to M$ connecting boundary points we may solve the linear matrix differential equation
\[ \dot U + \A(\gamma(t))U = 0, \qquad U(b) = \id. \]


The matrix $C_{\A}(\gamma):=U(a)$ at the boundary, often called the \emph{non-Abelian X-ray transform} or \emph{scattering data} of $\A$, is expected to give good information about the function $\A$ inside $M$ once we have enough curves $\gamma$ travelling through $M$.
We show that indeed one can recover $\A$ from $C_{\A}$ when the curves $\gamma$ are the geodesics of a surface $M$ with strictly convex boundary, no trapped geodesics and no caustics.

\smallskip

\subsection{Statement of results} Let $(M,g)$ be a compact oriented Riemannian surface (i.e.\ two-dimensional manifold) with smooth boundary and let $SM = \{ (x,v)\in TM:\ g(v,v) = 1 \}$ be the unit tangent bundle. The geodesics starting at $\partial M$ and moving into $M$ can be parametrized by the {\it influx boundary}
 \[\partial_{+}SM:=\{(x,v)\in SM:\;x\in\partial M,\;g(v,\nu)\leq 0\}\]
where $\nu$ is the outer unit normal to $\partial M$. Given $(x,v)\in SM$, we let $\gamma_{x,v}(t)$ be the geodesic starting at $x$ with velocity $v$.
We will assume that the surface is {\it non-trapping}, which means that the time $\tau(x,v)$ when the geodesic $\gamma_{x,v}$ exits $M$ is finite for all $(x,v)\in SM$. Moreover, we will assume that $\partial M$ is strictly convex, meaning that the second fundamental form of $\partial M\subset M$ is positive definite.
This is already enough to imply that $M$ is diffeomorphic to a closed disc (cf. \cite{PSU20}). If in addition $(M,g)$ has no conjugate points we say that the surface is {\it simple}.

Our object of interest is the {\it non-Abelian X-ray transform} associated with a pair given by a connection $A$ and a matrix valued field $\Phi$. Let $G$ be a Lie group of matrices with Lie algebra $\g$. The connection $A$ is just an element of $\Omega^{1}(M,\g)$, namely a smooth $\g$-valued 1-form and $\Phi\in C^{\infty}(M,\g)$. Given such a pair $(A,\Phi)$ (the reader might wish to think of $A$ as a Yang-Mills potential, and $\Phi$ as a Higgs field), 
and $\gamma_{x,v}$ a geodesic determined by $(x,v)$ in the influx boundary, we consider the matrix ordinary differential equation along $\gamma_{x,v}$:
\begin{align*}
    \dot U + \left[ A_{\gamma_{x,v}(t)}(\dot{\gamma}_{x,v}(t))+\Phi(\gamma_{x,v}(t)) \right] U = 0, \qquad U(\tau(x,v)) = \id. 
\end{align*}
Since $A$ and $\Phi$ take values in $\g$, the solution $U$ maps $U:[0,\tau(x,v)]\to G$ (see e.g.\ \cite[Proposition 5.3 in Chapter 1]{Taylor}). The scattering
data along $\gamma_{x,v}$ is defined as $C_{A,\Phi}(\gamma_{x,v}):=U(0)$. Observe that when $A$ and $\Phi$ are scalar (i.e. $\C$-valued), we obtain 
\[
\log \,U(0) = \int_0^{\tau(x,v)} [A_{\gamma_{x,v}(t)}(\dot{\gamma}_{x,v}(t))+\Phi(\gamma_{x,v}(t))]\ dt,
\]
which is the classical X-ray/Radon transform of $A+\Phi$ along the curve $\gamma_{x,v}$.
Considering the collection of all such data makes up the {\em scattering data} (or {\em non-Abelian X-ray transform}) of the pair $(A,\Phi)$, viewed here as a map 
\begin{align*}
    C_{A,\Phi}\colon \partial_+ SM\to G.
\end{align*}

We are concerned with the recovery of $(A,\Phi)$ from $C_{A, \Phi}$. The problem exhibits a natural gauge equivalence associated with the gauge group $\mathcal G$ given by those smooth $u:M\to G$
such that $u|_{\partial M}=\id$. The gauge group $\mathcal G$ acts on pairs (from the right) as follows:
\[(A,\Phi)\cdot u=(u^{-1}du+u^{-1}Au,u^{-1}\Phi u).\]
It is straightforward to check that for any $u\in \mathcal G$,
\[C_{(A,\Phi)\cdot u}=C_{A,\Phi}.\]
The geometric inverse problem consists in showing that the non-Abelian X-ray transform
\[(A,\Phi)\mapsto C_{A,\Phi}\]
 is injective up the action of $\mathcal G$. The present paper settles this question for the general linear group
$GL(n,\C)$ when $M$ is a simple surface. We shall indistinctly denote the set of (complex) $n\times n$ matrices by $\C^{n\times n}$ or
$\mathfrak{gl}(n,\C)$ if we wish to think of matrices as the Lie algebra of the general linear group $GL(n,\C)$.

\begin{Theorem} \label{thm:nonabelian}
Let $(M,g)$ be a simple surface. Suppose we are given pairs $(A,\Phi)$ and $(B,\Psi)$ with
$A,B\in \Omega^{1}(M,\mathfrak{gl}(n,\C))$ and
$\Phi,\Psi\in C^{\infty}(M,\mathfrak{gl}(n,\C))$. If
\[C_{A,\Phi}=C_{B,\Psi},\]
then there is $u\in \mathcal G$ such that $(A,\Phi)\cdot u=(B,\Psi)$.
\end{Theorem}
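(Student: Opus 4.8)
The plan is to transfer everything to the unit tangent bundle $SM$ and reduce to a single matrix transport equation. Set $\mathbb A(x,v) := A_x(v) + \Phi(x)$ and $\mathbb B(x,v) := B_x(v) + \Psi(x)$; these are smooth $\mathfrak{gl}(n,\C)$-valued functions on $SM$ whose vertical (fibrewise) Fourier expansions involve only the modes $-1,0,1$, the modes $\pm 1$ coming from the $1$-forms and the mode $0$ from the functions. Writing $X$ for the geodesic vector field and $U,\widetilde U$ for the parallel transports of $(A,\Phi)$ and $(B,\Psi)$ along geodesics (normalised to $\id$ at the exit time, as in the definition of the scattering data), I would set $Q(y,w):=\widetilde U(s)\,U(s)^{-1}$ for $(y,w)=(\gamma(s),\dot\gamma(s))$; a direct computation gives
\[ XQ + \mathbb B\,Q - Q\,\mathbb A = 0 \quad\text{on } SM, \]
and the hypothesis $C_{A,\Phi}=C_{B,\Psi}$ together with the normalisation forces $Q|_{\partial SM}=\id$ (on the influx boundary $Q$ equals $C_{B,\Psi}\,C_{A,\Phi}^{-1}=\id$, on the outflux boundary it equals $\id\cdot\id$), while simplicity is what makes $Q$ smooth up to $\partial SM$. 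The theorem then reduces to proving that $Q$ does not depend on the velocity variable, i.e.\ that it has vertical Fourier degree $0$: once $Q=Q(x)$, separating the transport equation into its degree-$0$ and degree-$\pm1$ parts gives $\Psi Q = Q\Phi$ and $dQ + BQ - QA = 0$, which together say exactly that $(A,\Phi)\cdot u = (B,\Psi)$ with $u:=Q^{-1}$, and $u\in\mathcal G$ because $Q$ is smooth, invertible, and equal to $\id$ on $\partial M$.

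To control $Q$ I would reduce the structure group from $GL(n,\C)$ to the unitary group $U(n)$ by a loop-group factorisation carried out fibrewise over $SM$. Each fibre of $SM\to M$ is a circle, so $x\mapsto Q|_{S_xM}$ is a smooth family of loops in $GL(n,\C)$, and the Iwasawa (or Birkhoff) factorisation of the loop group lets one write $Q = F\,R$ with $F\colon SM\to U(n)$ a fibrewise unitary loop and $R$ having only non-negative vertical Fourier modes (so that it extends holomorphically to the disc), suitably normalised in degree $0$. Uniqueness of the factorisation and its smooth dependence on parameters make $F$ and $R$ smooth on $SM$, and since the constant loop $\id$ factors trivially one gets $F|_{\partial SM}=R|_{\partial SM}=\id$. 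Substituting $Q=FR$ into the transport equation and conjugating through $R$ produces a transport equation of the same shape for the $U(n)$-valued map $F$, with the pair $(\mathbb A,\mathbb B)$ replaced by new data; using that $R$ is holomorphic, and if necessary a further symmetrising factorisation, one aims to keep this new attenuation skew-Hermitian with vertical Fourier modes in $\{-1,0,1\}$, so that the reduced problem genuinely falls under the unitary case. Executing this reduction globally and smoothly on $SM$, compatibly with the boundary normalisation and with control of the vertical degrees, is the step I expect to be the main obstacle.

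It then remains to settle the unitary case: $Q\colon SM\to U(n)$ with $Q|_{\partial SM}=\id$ solving $XQ + \mathbb B Q - Q\mathbb A = 0$ for skew-Hermitian $\mathbb A,\mathbb B$ with vertical Fourier modes in $\{-1,0,1\}$, and one must show that $Q$ has vertical Fourier degree $0$. For this I would combine an energy identity of Pestov type adapted to the unitary connection---this stays usable on a simple surface because the operators built from a unitary connection are self-adjoint and the curvature contribution can be absorbed---with scalar holomorphic integrating factors, which exist on a simple surface by solving a $\dbar$-equation on the disc; the matrix-valued holomorphic integrating factors one would want for a general $GL(n,\C)$ connection need not exist, which is precisely why the detour through $U(n)$ is made. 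Together these force every vertical Fourier mode of $Q$ of nonzero degree to vanish, which amounts to the injectivity up to gauge of the attenuated X-ray transform for a unitary connection and a Higgs field on simple surfaces; combined with the reduction of the first paragraph, this proves the theorem. (The passage to an arbitrary Lie group $G$ would follow afterwards by embedding $G$ in some $GL(N,\C)$ and checking that the gauge produced above may be taken $G$-valued.)
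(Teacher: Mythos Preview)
Your overall architecture is right --- pseudo-linearize to a single transport equation on $SM$, invoke a loop-group factorisation to pass from $GL(n,\C)$ to $U(n)$, and finish with the Pestov identity plus scalar holomorphic integrating factors --- but you apply the factorisation to the wrong object, and this is a genuine gap rather than a detail.

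If you write $Q=FR$ with $F$ unitary and $R$ fibrewise holomorphic and substitute into $XQ+\B Q-Q\A=0$, you get
\[
XF+\B F-F\A'=0,\qquad \A':=R\A R^{-1}-(XR)R^{-1}.
\]
Equivalently $\A'=F^{-1}XF+F^{-1}\B F$; the first summand is skew-Hermitian because $F$ is unitary, but the second is skew-Hermitian only if $\B$ already is. So neither side of the new transport equation is skew-Hermitian, and you have not landed in the unitary problem. Your phrase ``if necessary a further symmetrising factorisation'' is exactly where the argument breaks, and there is no evident second factorisation of $Q$ that repairs it. The paper's key observation (its Lemma~5.2) is to factor instead an \emph{integrating factor} $R_0$ of the attenuation, i.e.\ a solution of $XR_0+\A R_0=0$: writing $R_0=FU$ with $F$ holomorphic and $U$ unitary, the gauge-transformed attenuation
\[
\B':=F^{-1}XF+F^{-1}\A F
\]
coincides with $-(XU)U^{-1}$ and is therefore \emph{automatically} skew-Hermitian; since $F,F^{-1}$ are holomorphic one also has $\B'\in\bigoplus_{k\ge -1}\Omega_k$, which combined with skew-Hermiticity forces $\B'\in\Omega_{-1}\oplus\Omega_0\oplus\Omega_1$. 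This is the ``miracle'' that makes the reduction work, and it lives at the level of the linear attenuated transform (Theorem~1.2), which is then applied to the pseudo-linearized equation for $W=Q-\id$ with attenuation $E(\A,\B)$ viewed in $\mathfrak{gl}(n^2,\C)$. In short: factor the integrating factor of the attenuation, not the unknown $Q$.
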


Note that the theorem implies in particular that scattering rigidity just for matrix fields does not have a gauge. Indeed, if $C_{\Phi}=C_{\Psi}$, where $\Phi$ and $\Psi$ are two matrix fields, Theorem \ref{thm:nonabelian} applied with $A=B=0$ implies that $u=\id$ and thus $\Phi=\Psi$.

The non-linear inverse problem resolved in Theorem \ref{thm:nonabelian} is closely related to a linear inverse problem involving an attenuated X-ray transform. The relationship is via a pseudo-linearization identity and is well known;
we explain this relationship in detail in Section \ref{section:pseudo}, but for now we simply refer to equation \eqref{eq:pseudo-linearization} below.
We now state the solution to the relevant linear inverse problem. Given a pair $(A,\Phi)$ taking values in $\mathfrak{gl}(n,\C)$ and $f\in C^{\infty}(SM,\C^n)$, consider the unique solution $u(t)$ to the vector valued
ordinary differential equation
\begin{align*}
    \dot u + \left[ A_{\gamma_{x,v}(t)}(\dot{\gamma}_{x,v}(t))+\Phi(\gamma_{x,v}(t)) \right] u = -f(\gamma_{x,v}(t),\dot{\gamma}_{x,v}(t)), \quad u(\tau(x,v)) = 0. 
\end{align*}
We define the {\it attenuated X-ray transform} of $f$ as
\[I_{A,\Phi}(f)(x,v):=u(0)\]
where $(x,v)\in \partial_{+}SM$.  We have:

\begin{Theorem} \label{thm:main}
Let $M$ be a simple surface and consider an arbitrary attenuation pair $(A,\Phi)$ with $A\in \Omega^{1}(M,\mathfrak{gl}(n,\C))$ and
$\Phi\in C^{\infty}(M,\mathfrak{gl}(n,\C))$. 
Assume that $f:SM\to\C^n$ is a smooth function of the form
$F(x)+\alpha_{x}(v)$, where $F:M\to\C^n$ is a smooth function
and $\alpha$ is a $\C^n$-valued 1-form. If $I_{A,\Phi}(f)=0$, then
$F = \Phi p$ and $\alpha=dp+Ap$, where $p:M\to\C^n$ is a smooth
function with $p|_{\partial M}=0$.
\end{Theorem}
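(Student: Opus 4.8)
The plan is to convert the vanishing of $I_{A,\Phi}(f)$ into a transport equation on $SM$, to observe that the desired conclusion amounts to a statement about a single fiberwise Fourier component of its solution, and to prove that statement by reducing, via a loop group factorization, to the already understood unitary setting. I would first reformulate the hypothesis: writing $\cA(x,v):=A_x(v)+\Phi(x)$ for the total attenuation and $X$ for the geodesic vector field, lifting the defining ODE to $SM$ shows that $I_{A,\Phi}(f)=0$ together with the terminal condition is equivalent to the existence of $w\colon SM\to\C^n$ with
\[
  Xw+\cA w=-f,\qquad w|_{\partial SM}=0 ;
\]
since $M$ is simple, $w$ is smooth up to the boundary (the standard regularity statement for transport equations on simple surfaces). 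The conclusion of the theorem is then equivalent to the assertion that $w$ equals its degree-$0$ fiberwise Fourier component, i.e.\ that $w$ is the pullback of some $p\in C^\infty(M,\C^n)$: recalling that $f=F+\alpha$ and $\cA$ both have fiberwise modes of degree $-1,0,1$ only, with $\cA_0=\Phi$, if $w=w_0$ then the degree $0$ and $\pm1$ parts of $Xw+\cA w=-f$ read $\Phi w_0=-F$ and $(\eta_\pm+A_{\pm1})w_0=-\alpha_{\pm1}$, which with $p:=-w_0$ are exactly $F=\Phi p$ and $\alpha=dp+Ap$, while $p|_{\partial M}=0$ follows from $w|_{\partial SM}=0$; conversely such a $p$ produces a boundary-vanishing solution of $(X+\cA)(w+p)=0$, hence $w=-p$ by uniqueness of the geodesic ODE. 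Here $\eta_\pm$ denote the fiberwise raising and lowering operators and $A_{\pm1}$ the degree $\pm1$ parts of $A$. So it remains to prove $w=w_0$.

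In the line-bundle case one shows $w=w_0$ with the help of a holomorphic integrating factor of the form $\exp(\int\cdots)$ together with a Fourier-mode argument, and non-commutativity is irrelevant; the matrix analogue of this step is the core of the proof. I would establish the existence, on the simple surface $M$, of matrix \emph{holomorphic} and \emph{anti-holomorphic} integrating factors for the $\mathfrak{gl}(n,\C)$ attenuation $\cA$: smooth maps $R_+,R_-\colon SM\to GL(n,\C)$ of nonnegative, respectively nonpositive, fiberwise Fourier degree, invertible with $R_\pm^{-1}$ of the same degree type, which conjugate $X+\cA$ to the transport operator of a skew-Hermitian attenuation pair (a $\mathfrak u(n)$-valued connection together with a degree-$0$ Higgs term). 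Their construction proceeds by first solving the ordinary matrix transport equation $X\mathcal R+\cA\mathcal R=0$ — which on each fiber of $SM$ is a loop in $GL(n,\C)$ — and then applying a factorization theorem for loop groups (of Birkhoff/Iwasawa type) to split it into a holomorphic factor and a unitary factor; this is exactly the mechanism by which the problem is reduced to the unitary group, and the Birkhoff factors are automatically invertible. The main obstacle I anticipate is precisely this step: controlling the smoothness and invertibility of the factors globally on $SM$, their smooth dependence on the basepoint, and the lower-order terms introduced by the conjugation.

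Granting these integrating factors, the proof is concluded by the known unitary theory. Substituting $w=R_+h$ turns the transport equation into $Xh+\cB h=-R_+^{-1}f$ with $h|_{\partial SM}=0$, where $\cB$ is the sum of a $\mathfrak u(n)$-valued connection term and a degree-$0$ Higgs term, and the source $R_+^{-1}f$ has fiberwise Fourier modes of degree $\geq-1$. For such a unitary attenuation pair the required vanishing statement is available through energy methods — the Pestov identity with a $\mathfrak u(n)$-connection, whose curvature term contributes with a favorable sign — combined with scalar holomorphic integrating factors to absorb the degree-$0$ Higgs contribution, as in the established treatment of the attenuated X-ray transform on simple surfaces. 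Applying this, and symmetrically using $R_-$, forces every Fourier component of $w$ of nonzero degree to vanish, i.e.\ $w=w_0$. By the first step, this is the theorem, with $p=-w_0$.
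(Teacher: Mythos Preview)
Your proposal is correct and follows essentially the same route as the paper: reduce to a transport equation on $SM$ for a smooth boundary-vanishing $w$, factor a matrix integrating factor $R$ solving $XR+\cA R=0$ via the loop-group (Iwasawa-type) decomposition into a fiberwise holomorphic piece $F$ times a unitary piece, observe that conjugation by $F$ produces a skew-Hermitian attenuation pair $\B\in\Omega_{-1}\oplus\Omega_0\oplus\Omega_1$, and then invoke the unitary result of \cite{PSU12} to force $F^{-1}w$ (hence $w$) to be holomorphic. The only cosmetic difference is that the paper handles the antiholomorphic half by complex-conjugating the transport equation and reapplying the holomorphic argument to $\bar w$, rather than by building a separate antiholomorphic factor $R_-$; both devices yield $w=w_0$ in the same way.
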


Theorems \ref{thm:nonabelian} and \ref{thm:main} were proved in \cite{PSU12} when the pair $(A,\Phi)$ takes values in the Lie algebra of the unitary group $\mathfrak{u}(n)$ (skew-hermitian matrices). The main idea in
the present paper is to use a basic factorization theorem for Loop Groups to perform a transformation that takes
the problem for the Lie algebra $\mathfrak{gl}(n,\C)$ to the problem for the Lie algebra $\mathfrak{u}(n)$
that we already know how to solve. The method of proof in \cite{PSU12} was based in ``moving across" the scheme of proof of the well-known Kodaira vanishing theorem in Complex Geometry to the transport problem relevant for the non-Abelian X-ray transform. In particular an important energy identity was used (the Pestov identity, analogous in some sense to the Weitzenb\"ock identity, but involving instead the geodesic vector field)
and this identity develops unmanageable terms once the pair $(A,\Phi)$ stops taking values in $\mathfrak{u}(n)$; in other words
we need to deal with a dissipative situation as far as energy identities is concerned. A fix to this problem was implemented by the authors
in \cite{PS18}, but it comes at a cost: we need to assume negative curvature. The upgrade from negative curvature
to no conjugate points that the present paper provides seems out of reach using the estimates in \cite{PS18}.
The structure theorem for Loop Groups that we use is the infinite dimensional version of the familiar fact
that asserts that an invertible matrix is the product of an upper triangular matrix and a unitary matrix. It is perhaps the most basic of the factorization theorems that include also the Birkhoff and Bruhat factorizations \cite[Chapter 8]{PS86}.

It turns out that Theorem \ref{thm:nonabelian} is enough to resolve the problem of injectivity of the non-Abelian X-ray transform for an arbitrary Lie group $G$; we explain this in Section \ref{section:lieg}, see Theorem \ref{thm:nonabelian2} below.

Finally, as a corollary we deduce that it is possible to detect purely from boundary measurements whether a matrix-valued field takes values in
the set of skew-hermitian matrices:

\begin{Corollary} Let $(M,g)$ be a simple surface and $\Phi \in  C^{\infty}(M,\mathfrak{gl}(n,\C))$.
Then $C_{\Phi}$ takes values in the unitary group iff $\Phi^{*}=-\Phi$, where $\Phi^*$ denotes the conjugate transpose of $\Phi$.
\end{Corollary}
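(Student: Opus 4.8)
The plan is to prove the two implications separately; the forward direction is classical and the converse is a quick consequence of Theorem~\ref{thm:nonabelian}. For the ``if'' direction, suppose $\Phi^*=-\Phi$, so that $\Phi$ takes values in $\mathfrak u(n)$. Fix $(x,v)\in\partial_+SM$ and let $U$ solve $\dot U+\Phi(\gamma_{x,v}(t))U=0$ with $U(\tau(x,v))=\id$. Then $\frac{d}{dt}(U^*U)=\dot U^*U+U^*\dot U=-U^*(\Phi^*+\Phi)U=0$, so $U^*U\equiv\id$; in particular $C_\Phi(\gamma_{x,v})=U(0)\in U(n)$. Hence $C_\Phi$ is unitary-valued.

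For the converse I would introduce the auxiliary Higgs field $\Psi:=-\Phi^*\in C^\infty(M,\mathfrak{gl}(n,\C))$ and first establish the elementary identity $C_\Psi(\gamma_{x,v})=\bigl(C_\Phi(\gamma_{x,v})^*\bigr)^{-1}$ for all $(x,v)\in\partial_+SM$. To see this, take conjugate transposes in $\dot U=-\Phi U$ to get $\dot{U^*}=-U^*\Phi^*$, and differentiate $W:=(U^*)^{-1}$ using $\frac{d}{dt}(U^*)^{-1}=-(U^*)^{-1}\dot{U^*}(U^*)^{-1}$ to find $\dot W=\Phi^*W=-\Psi W$ with $W(\tau(x,v))=(U(\tau(x,v))^*)^{-1}=\id$. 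By uniqueness, $W$ is exactly the propagator defining $C_\Psi$, so evaluating at $t=0$ gives $C_\Psi(\gamma_{x,v})=W(0)=(C_\Phi(\gamma_{x,v})^*)^{-1}$. Now if $C_\Phi$ takes values in $U(n)$, then $(C_\Phi^*)^{-1}=C_\Phi$ pointwise, hence $C_\Psi=C_\Phi$, i.e. $C_{0,-\Phi^*}=C_{0,\Phi}$.

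At this point I would invoke Theorem~\ref{thm:nonabelian} with the pairs $(A,\Phi)=(0,\Phi)$ and $(B,\Psi)=(0,-\Phi^*)$: it yields $u\in\mathcal G$ with $(0,\Phi)\cdot u=(0,-\Phi^*)$, that is $(u^{-1}du,\,u^{-1}\Phi u)=(0,-\Phi^*)$. Comparing the connection components forces $u^{-1}du=0$, so $u$ is locally constant; since $M$ is connected (indeed diffeomorphic to a disc) and $u|_{\partial M}=\id$, this gives $u\equiv\id$. The Higgs components then read $\Phi=u^{-1}\Phi u=-\Phi^*$, which is the desired conclusion. There is no genuine obstacle here: the only points requiring a little care are getting the signs and the order of multiplication right in the ODE satisfied by $(U^*)^{-1}$, and observing that the gauge ambiguity in Theorem~\ref{thm:nonabelian} degenerates to the identity precisely because both attenuation pairs have vanishing connection part.
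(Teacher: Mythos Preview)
Your proof is correct and follows essentially the same route as the paper. The paper establishes the identity $C_{\Phi}^{*}=C_{-\Phi^{*}}^{-1}$ (equivalently your $C_{-\Phi^{*}}=(C_{\Phi}^{*})^{-1}$), deduces $C_{\Phi}=C_{-\Phi^{*}}$ from unitarity, and then invokes Theorem~\ref{thm:nonabelian}; you simply spell out in more detail the ODE computation behind that identity, the easy ``if'' direction, and the observation that the gauge $u$ must be the identity when both connection parts vanish.
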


\begin{proof} From the definition of the scattering data we see that $C^*_{\Phi}=C_{-\Phi^{*}}^{-1}$.
If $C_{\Phi}$ is unitary we have $C_{\Phi}=C_{-\Phi^*}$ and Theorem \ref{thm:nonabelian} gives $\Phi=-\Phi^*$.
\end{proof}

\subsection{Motivation} The non-Abelian X-transform $(A,\Phi)\mapsto C_{A,\Phi}$ appears naturally in several contexts. For instance, when $\Phi=0$, $C_{A}$ represents the parallel transport of the connection $A$ along geodesics connecting boundary points and the injectivity question for the non-Abelian X-ray transform reduces to the question of recovering a connection up to gauge from its parallel transport along a distinguished set of curves, i.e.\ the geodesics of the metric $g$. If $A\in \Omega^{1}(M,\mathfrak{u}(n))$, we may consider the twisted or connection Laplacian $d_{A}^*d_{A}$, where $d_{A}=d+A$. Egorov's theorem for the connection Laplacian
naturally produces the parallel transport of $A$ along geodesics of $g$ as a high energy limit, cf.\ \cite[Proposition 3.3]{JS07}, and this data can also be obtained from the corresponding wave equation following \cite{OSSU, Uhlmann_scattering}. 
When $A=0$ and $\Phi\in C^{\infty}(M,\mathfrak{so}(3))$, the non-Abelian X-ray transform $\Phi\mapsto C_{\Phi}$ arises in Polarimetric Neutron Tomography \cite{DLSS,Hetal}, a new tomographic method designed to detect magnetic fields inside materials by probing them with neutron beams. The case of pairs $(A,\Phi)$ arises in the literature on solitons, mostly in the context of the Bogomolny equations in $2+1$ dimensions \cite{MZ81,W88}.
Applications to coherent quantum tomography are given in \cite{Il}.
We refer to \cite{Novikov_nonabelian} for a recent survey on the non-Abelian X-ray transform and its applications.

\subsection{Comparison with existing literature}

We first mention that there is a substantial difference between the case $\dim M = 2$ considered in this article and the case $\dim M \geq 3$. In fact, in three and higher dimensions the inverse problems considered in Theorems \ref{thm:nonabelian} and \ref{thm:main} are formally overdetermined, whereas in two dimensions they are formally determined (one attempts to recover functions depending on $d$ variables from data depending on $2d-2$ variables). When $\dim M \geq 3$, results corresponding to Theorems \ref{thm:nonabelian} and \ref{thm:main} are proved in \cite{No} in the case of $\mR^3$ and in \cite{PSUZ} on compact strictly convex manifolds admitting a strictly convex function, based on the method introduced in \cite{UV}.

We will now focus on earlier results for $\dim M = 2$. As we have already mentioned, Theorems \ref{thm:nonabelian} and \ref{thm:main} were proved in \cite{PSU12} when the pair $(A,\Phi)$ takes values in $\mathfrak{u}(n)$. There are several other important contributions that we now briefly review. To organise the discussion we consider
two scenarios: the Euclidean case and non-Euclidean one.
When $(M,g)$ is a subset of $\re^2$ with the Euclidean metric, the literature is extensive, particularly
in the abelian case $n=1$, where a result like Theorem \ref{thm:main} is simply the statement of injectivity of the attenuated Radon transform relevant in the imaging modality SPECT. In this case we limit ourselves to a discussion involving the genuinely non-Abelian situation ($n\geq 2$). The results here tend to be formulated in all
$\re^2$ and in parallel-beam geometry taking advantage that geodesics are just straight lines.
In \cite{No}, R.{ Novikov considers pairs $(A,\Phi)$ that are not compactly supported but have suitable decay conditions at infinity and establishes local uniqueness of the trivial pair and gives examples
in which global uniqueness fails (existence of ''ghosts''). G.\ Eskin in \cite{E} considers compactly supported pairs and shows injectivity as in Theorem \ref{thm:nonabelian}.  His proof relies on a delicate result proved in \cite{ER04} on the existence of {\it matrix} holomorphic (in the vertical direction) integrating factors. We note that our proof of Theorem \ref{thm:nonabelian} replaces this delicate step by the use of the Loop Group factorization theorem and the proof via energy identities in \cite{PSU12}
that only requires the existence of {\it scalar} holomorphic integrating factors. These are supplied
via microlocal analysis of the normal operator of the standard X-ray transform by \cite{PU05}.
In the Euclidean setting, we also mention the result of Finch and Uhlmann in \cite{FU} that establishes
injectivity up to gauge for unitary connections assuming that they have small curvature.

In the non-Euclidean setting, as far as we are aware the first contributions appear in \cite{V1,V2,Sh1}, but these results have restrictions on the size of the pairs $(A,\Phi)$.  Theorem \ref{thm:main} for $A=0$ and $n=1$ was proved in \cite{SU11}. Genericity results and Fredholm alternatives for the problem are given in \cite{MoPa,Z}. As we have already mentioned, \cite{PS18} proves Theorem \ref{thm:nonabelian} assuming negative Gaussian curvature. The existence of {\it matrix} holomorphic integrating factors for any simple surface has been recently settled in \cite{BP21}; however this result requires the key input from the present paper (cf. Lemma \ref{lemma:miracle} below). In \cite{BP21} the authors also give a full characterization of the range of the non-Abelian X-ray transform.
The problem can also be considered for closed surfaces, cf.\ \cite{P1} for a survey that includes these cases.


\subsection*{Acknowledgements} GPP was supported by EPSRC grant EP/R001898/1 and the Leverhulme trust.
MS\ was supported by the Academy of Finland (Finnish Centre of Excellence in Inverse Modelling and Imaging, grant numbers 312121 and 309963) and by the European Research Council under Horizon 2020 (ERC CoG 770924).

\section{Preliminaries}

This section provides some well-known background material and it may all be found in \cite{GK,SiTh}; for a recent presentation and its relevance to geometric inverse problems in two dimensions we refer to \cite{PSU20}.

Let $(M,g)$ be a compact oriented two dimensional Riemannian manifold with smooth boundary
$\partial M$. Let $X$ denote the geodesic vector field, i.e.\ the infinitesimal generator of the geodesic flow $\varphi_t$ acting on the unit
circle bundle $SM$. The latter is a compact 3-manifold with boundary given by $\partial SM=\{(x,v)\in SM:\;x\in \partial M\}$. 
Since $M$ is assumed oriented there is a circle action on the fibers of $SM$ with infinitesimal generator $V$ called the {\it vertical vector field}. It is possible to complete the pair $X,V$ to a global frame
of $T(SM)$ by considering the vector field $X_{\perp}:=[X,V]$. There are two additional structure equations given by $X=[V,X_{\perp}]$ and $[X,X_{\perp}]=-KV$
where $K$ is the Gaussian curvature of the surface. Using this frame we can define a Riemannian metric on $SM$ by declaring $\{X,X_{\perp},V\}$ to be an orthonormal basis and the volume form of this metric will be denoted by $d\Sigma^3$. The fact that $\{ X, X_{\perp}, V \}$ are orthonormal together with the commutator formulas implies that the Lie derivative of $d\Sigma^3$ along the three vector fields vanishes.

If $x = (x_1, x_2)$ are isothermal coordinates in $(M,g)$ so that the metric has the form $g = e^{2\lambda(x)} \,dx^2$ and if  $\theta$ is the angle between $v$ and $\partial_{x_1}$, then in the $(x,\theta)$ coordinates in $SM$ the vector fields have the explicit formulas
\begin{align*}
X &= e^{-\lambda}\left(\cos\theta\frac{\partial}{\partial x_{1}}+
\sin\theta\frac{\partial}{\partial x_{2}}+
\left(-\frac{\partial \lambda}{\partial x_{1}}\sin\theta+\frac{\partial\lambda}{\partial x_{2}}\cos\theta\right)\frac{\partial}{\partial \theta}\right), \\
X_{\perp} &= -e^{-\lambda}\left(-\sin\theta\frac{\partial}{\partial x_{1}}+
\cos\theta\frac{\partial}{\partial x_{2}}-
\left(\frac{\partial \lambda}{\partial x_{1}}\cos\theta+\frac{\partial \lambda}{\partial x_{2}}\sin\theta\right)\frac{\partial}{\partial \theta}\right), \\
V &= \frac{\partial}{\partial\theta}.
\end{align*}

Given functions $u,v:SM\to \C^n$ we consider the
inner product
\[(u,v) =\int_{SM}\langle u,v\rangle_{\mathbb \C^n}\,d\Sigma^3.\]

The space $L^{2}(SM,\C^n)$ decomposes orthogonally
as a direct sum
\[L^{2}(SM,\C^n)=\bigoplus_{k\in\mathbb Z}H_{k}\]
where $H_k$ is the eigenspace of $-iV$ corresponding to the eigenvalue $k$.
A function $u\in L^{2}(SM,\C^n)$ has a Fourier series expansion
\[u=\sum_{k=-\infty}^{\infty}u_{k},\]
where $u_{k}\in H_k$. Let $\Omega_{k}=C^{\infty}(SM,\C^n)\cap H_{k}$.

\begin{Definition} A function $u:SM\to\C^n$ is said to be (fibre-wise) holomorphic 
 if $u_{k}=0$ for all $k<0$. Similarly, $u$ is said to be (fibre-wise) antiholomorphic if $u_{k}=0$ for all $k>0$.
\end{Definition}

As in \cite{GK} we introduce the following
first order operators 
$$\eta_{+},\eta_{-}:C^{\infty}(SM,\C^n)\to
C^{\infty}(SM,\C^n)$$ given by
\[\eta_{+}:=(X+iX_{\perp})/2,\;\;\;\;\;\;\eta_{-}:=(X-iX_{\perp})/2.\]
Clearly $X=\eta_{+}+\eta_{-}$. 
We have
\[\eta_{+}:\Omega_{m}\to \Omega_{m+1},\;\;\;\;\eta_{-}:\Omega_{m}\to \Omega_{m-1},\;\;\;\;(\eta_{+})^{*}=-\eta_{-}.\]
In particular, $X$ has the following important mapping property
\[X:\oplus_{k\geq 0}\Omega_{k}\to \oplus_{k\geq -1}\Omega_{k}.\]
(For any $I\subset\mathbb{Z}$, $\oplus_{k\in I}\Omega_{k}$ denotes the set of smooth functions $u$ such that $u_{k}=0$ for $k\notin I$.)
We will often use all of the above for smooth functions taking values in complex matrices $\mathfrak{gl}(n,\C)$
and we will not make any distinction in the notation as it will become clear from the context.

\section{The pseudo-linearization identity}
\label{section:pseudo}

Let $(M,g)$ be a compact non-trapping manifold with strictly convex boundary and let  $\A\in C^{\infty}(SM,\C^{n\times n})$. Consider $(M,g)$ isometrically embedded in a closed manifold $(N,g)$ and extend $\A$ smoothly to $N$. Under these assumptions, $\A$ on $N$ defines a {\it smooth} cocycle $C$ over
the geodesic flow $\varphi_t$ of $(N,g)$. The cocycle takes values in the group $GL(n,\C)$ and
is defined as follows: let
$C:SN\times \mathbb{R}\to GL(n,\C)$ be determined by the following matrix ODE along the orbits of the geodesic flow
\[\frac{d}{dt}C(x,v,t)+\A(\varphi_{t}(x,v))C(x,v,t)=0,\;\;\;\;\;C(x,v,0)=\mbox{\rm Id}.\]
The function $C$ is a {\it cocycle}: 
\[C(x,v,t+s)=C(\varphi_{t}(x,v),s)\,C(x,v,t)\]
for all $(x,v)\in SN$ and $s,t\in\mathbb{R}$. 

Consider a slightly larger compact manifold $M_0$ engulfing $M$ so that $(M_0,g)$ is still non-trapping with strictly convex boundary and let $\tau_0$ be the exit time of $M_0$.  The next lemma shows that the equation $XR + \A R = 0$ in $SM$ has a smooth solution.

\begin{Lemma} The function $R:SM\to GL(n,\C)$ defined by
\[R(x,v):=[C(x,v,\tau_{0}(x,v))]^{-1},\]
is smooth and satisfies
\[XR+\A R=0.\]
\label{lemma:matrixintegrating}
\end{Lemma}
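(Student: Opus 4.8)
The plan is to exploit the cocycle property of $C$ over the geodesic flow on $N$, together with the fact that $\tau_0$ is smooth on $SM$ (which is where non-trapping and strict convexity of $\partial M_0$ enter, via $M \Subset M_0$). First I would record that smoothness of $R$ follows immediately: $C$ is smooth on $SN \times \mR$ by smooth dependence of ODE solutions on initial conditions and parameters, $\tau_0 : SM \to \mR$ is smooth because $\partial M_0$ is strictly convex and $(M_0,g)$ is non-trapping (the exit time function is smooth on the open set of inward/tangent-avoiding vectors, and $SM$ sits strictly inside), and $C(x,v,\tau_0(x,v))$ lands in $GL(n,\C)$, so its inverse is a smooth function of $(x,v)$.

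For the transport equation, the key observation is that $R$ is built by ``completing the orbit to the exit point of $M_0$''. Fix $(x,v) \in SM$ and let $\psi(s) = C(x,v,\tau_0(x,v) + s)$ near $s = 0$. On one hand, differentiating the cocycle identity $C(x,v,t+s) = C(\varphi_t(x,v),s)\,C(x,v,t)$ at $s=0$ with $t = \tau_0(x,v)$ gives $\frac{d}{ds}\big|_{s=0} C(x,v,\tau_0(x,v)+s) = -\A(\varphi_{\tau_0(x,v)}(x,v))\,C(x,v,\tau_0(x,v))$. On the other hand, I want to differentiate $s \mapsto C(x,v,\tau_0(\varphi_s(x,v)))$; here one uses that the exit time transforms under the flow by $\tau_0(\varphi_s(x,v)) = \tau_0(x,v) - s$ (the standard cocycle identity for exit times, valid as long as $\varphi_s(x,v)$ stays in $M_0$, in particular for small $s$ since $(x,v)\in SM \Subset M_0$). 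Therefore $C(x,v,\tau_0(\varphi_s(x,v))) = C(x,v,\tau_0(x,v) - s)$, whose $s$-derivative at $0$ is $+\A(\varphi_{\tau_0}(x,v))\,C(x,v,\tau_0(x,v))$.

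Now combine these. Write $R(x,v) = C(x,v,\tau_0(x,v))^{-1}$ and compute $XR$ as $\frac{d}{ds}\big|_{s=0} R(\varphi_s(x,v))$. Since $R(\varphi_s(x,v)) = C(\varphi_s(x,v), \tau_0(\varphi_s(x,v)))^{-1}$, and using the cocycle identity to rewrite $C(\varphi_s(x,v),\tau_0(\varphi_s(x,v)))$ in terms of $C(x,v,\cdot)$ — namely $C(\varphi_s(x,v), \tau_0(x,v)-s) = C(x,v,\tau_0(x,v))\,C(x,v,s)^{-1}$ — one gets $R(\varphi_s(x,v)) = C(x,v,s)\,R(x,v)$. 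Differentiating at $s=0$ and using $\frac{d}{ds}\big|_{s=0} C(x,v,s) = -\A(x,v)$ yields $XR(x,v) = -\A(x,v) R(x,v)$, i.e.\ $XR + \A R = 0$, as claimed.

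The only genuinely delicate point is the bookkeeping of the exit-time cocycle identity $\tau_0 \circ \varphi_s = \tau_0 - s$ and making sure all evaluations stay within the range of validity; since $(x,v) \in SM$ lies well inside $M_0$, the orbit segment needed is interior for $|s|$ small, so this is harmless. Everything else is the standard manipulation of a multiplicative cocycle, so I expect no real obstacle beyond writing the identities in the right order.
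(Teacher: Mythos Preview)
Your proposal is correct and follows essentially the same route as the paper: use the exit-time identity $\tau_0(\varphi_s(x,v))=\tau_0(x,v)-s$ together with the cocycle property to get $R(\varphi_s(x,v))=C(x,v,s)\,R(x,v)$, then differentiate at $s=0$. The computations in your second paragraph (differentiating $\psi$ two ways) are a detour that you never actually use---the argument in your third paragraph is self-contained and is exactly what the paper does.
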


\begin{proof} Since $\tau_0|_{SM}$ is smooth and the cocycle $C$ is smooth, the smoothness of $R$ follows right away. To check that $R$ satisfies the stated equation, we use that $\tau_{0}(\varphi_{t}(x,v))=\tau_{0}(x,v)-t$ together with the cocycle property to obtain
\[R(\varphi_{t}(x,v))=[C(\varphi_{t}(x,v),\tau_0(\varphi_{t}(x,v))]^{-1}=C(x,v,t)[C(x,v,\tau_{0}(x,v))]^{-1}.\]
Diiferentiating at $t=0$ yields
\[XR=-\A R\]
as desired.
\end{proof}

Let us define the {\it outflux boundary} by
 \[\partial_{-}SM:=\{(x,v)\in SM:\;x\in\partial M,\;g(v,\nu)\geq 0\}.\]
From the proof of Lemma \ref{lemma:matrixintegrating} we see that the function
\[U_{+}(x,v):=[C(x,v,\tau(x,v)]^{-1}\]
solves 
\begin{equation}
\left\{\begin{array}{ll}
XU_{+}+\A U_{+}=0,\\
U_{+}|_{\partial_{-}SM}=\id.\\
\end{array}\right.
\label{eq:weightplus}
\end{equation}

\begin{Definition} The {\it scattering data} of $\A$ is the map $C_{\A,+}:\partial_{+}SM\to GL(n,\C)$ given by
\[C_{\A,+}:=U_{+}|_{\partial_{+}SM}.\]
We shall also call $C_{\A,+}$ the non-abelian X-ray transform of $\A$.
\end{Definition}
Note that $C_{\A,+}\in C^{\infty}(\partial_{+}SM,\C^{n\times n})$. We can also consider the unique solution of

\begin{equation}
\left\{\begin{array}{ll}
XU_{-}+\A U_{-}=0,\\
U_{-}|_{\partial_{+}SM}=\id\\
\end{array}\right.
\label{eq:weightminus}
\end{equation}
and define scattering data $C_{\A,-}:\partial_{-}SM\to GL(n,\C)$ by setting
\[C_{\A,-}:=U_{-}|_{\partial_{-}SM}.\]
As discussed in \cite[Section 3]{PSU12}, both quantities are related by
\begin{equation}
C_{\A,-}=[C_{\A,+}]^{-1}\circ\alpha,
\label{eq:relplusminus}
\end{equation}
where $\alpha:\partial SM\to \partial SM$ is the scattering relation of the metric $g$.
In this paper we only work with $C_{\A,+}$ and from now on we drop the subscript $+$ from the notation.

\subsection{Attenuated X-ray transforms}Recall that in the scalar case, the attenuated ray transform $I_a f$ of a function $f \in C^{\infty}(SM,\C)$ with attenuation coefficient $a \in C^{\infty}(SM,\C)$ can be defined as the integral 
\[
I_a f(x,v) := \int_0^{\tau(x,v)} f(\varphi_t(x,v)) \text{exp}\left[ \int_0^t a(\varphi_s(x,v)) \,ds \right] dt, \quad (x,v) \in \partial_+ SM.
\]
Alternatively, we may set $I_a f := u|_{\partial_+ SM}$ where $u$ is the unique solution of the transport equation 
\[
Xu + au = -f \ \ \text{in $SM$}, \quad u|_{\partial_- SM} = 0.
\]

The last definition generalizes without difficulty to the case of a general attenuation $\A$. Let $f \in C^{\infty}(SM,\C^n)$ be a vector valued function and consider the following transport equation for $u: SM \to \C^n$, 
\[
Xu + {\A}u = -f \ \ \text{in $SM$}, \quad u|_{\partial_- SM} = 0.
\]
On a fixed geodesic the transport equation becomes a linear ODE with zero final condition, and therefore this equation has a unique solution denoted by $u^f$.

\begin{Definition} \label{def:aXray}
The attenuated X-ray transform of $f \in C^{\infty}(SM,\C^n)$ is given by 
\[
I_{\A} f := u^f|_{\partial_+ SM}.
\]
\end{Definition}

It is a simple task to write an integral formula for $u^f$ using a matrix integrating factor as in Lemma \ref{lemma:matrixintegrating}.

\begin{Lemma} If $R:SM\to GL(n,\C)$ solves $XR+\A R=0$, then 
\[u^f(x,v)=R(x,v)\int_{0}^{\tau(x,v)}(R^{-1}f)(\varphi_{t}(x,v))\,dt\;\;\text{for}\;(x,v)\in SM.\]
\label{lemma:formulau}
\end{Lemma}

\begin{proof} A computation using $XR^{-1}=R^{-1}\A$ (which follows easily from $XR+\A R=0$)
and $Xu^f+\A u^f=-f$ yields
\[X(R^{-1}u^{f})=(XR^{-1})u^{f}+R^{-1}u^{f}=-R^{-1}f.\]
Since $R^{-1}u^{f}|_{\partial_{-}SM}=0$, the lemma follows.
\end{proof}

\subsection{Pseudo-linearization identity} Given two functions $\A,\B\in C^{\infty}(SM,\C^{n\times n})$ we would like to have a formula relating $C_{\A}$ and $C_{\B}$ with certain attenuated X-ray transform. The following argument is quite similar to the one in \cite[Section 8]{PSU12}. We first introduce the map
$E(\A,\B):SM\to \text{End}(\C^{n\times n})$ given by
\[E(\A,\B)U:=\A U-U\B.\]
Here, $\text{End}(\C^{n\times n})$ denotes the linear endomorphisms of $\C^{n\times n}$.

\begin{Proposition}
Let $(M,g)$ be a compact non-trapping manifold with strictly convex boundary.
Given $\A,\B\in C^{\infty}(SM,\C^{n\times n})$, we have
\begin{equation}
C_{\A}C_{\B}^{-1}=\id+I_{E(\A,\B)}(\A-\B),
\label{eq:pseudo-linearization}
\end{equation}
where $I_{E(\A,\B)}$ denotes the attenuated X-ray transform with attenuation $E(\A,\B)$ as given in Definition \ref{def:aXray}.
\end{Proposition}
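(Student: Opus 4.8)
\subsection*{Proof plan}

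The plan is to exhibit an explicit primitive for the transport equation defining $I_{E(\A,\B)}(\A-\B)$, built directly from the two integrating factors attached to $\A$ and $\B$, and then to read off boundary values; this is the matrix analogue of the computation in \cite[Section 8]{PSU12}. Let $U_{\A},U_{\B}\colon SM\to GL(n,\C)$ be the solutions of the problem \eqref{eq:weightplus} associated with $\A$ and $\B$ respectively, so that $XU_{\A}+\A U_{\A}=0$, $XU_{\B}+\B U_{\B}=0$ and $U_{\A}|_{\partial_{-}SM}=U_{\B}|_{\partial_{-}SM}=\id$; since these take values in $GL(n,\C)$ the function $w:=U_{\A}U_{\B}^{-1}-\id\colon SM\to \C^{n\times n}$ is smooth. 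The claim will be that $w$ is exactly the solution $u^{\A-\B}$ of the $\C^{n\times n}$-valued transport equation $Xw+E(\A,\B)w=-(\A-\B)$ with $w|_{\partial_{-}SM}=0$ whose restriction to $\partial_{+}SM$ is, by Definition \ref{def:aXray} (applied with $\C^{n}$ replaced by $\C^{n\times n}$ and attenuation $E(\A,\B)$), the transform $I_{E(\A,\B)}(\A-\B)$.

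First I would check the boundary condition: on $\partial_{-}SM$ both $U_{\A}$ and $U_{\B}$ equal $\id$, hence $w|_{\partial_{-}SM}=\id\cdot\id-\id=0$. Next I would differentiate along $X$. From $XU_{\A}=-\A U_{\A}$ and, as in the proof of Lemma \ref{lemma:formulau}, from $XU_{\B}^{-1}=U_{\B}^{-1}\B$, the Leibniz rule gives $Xw=(XU_{\A})U_{\B}^{-1}+U_{\A}(XU_{\B}^{-1})=-\A U_{\A}U_{\B}^{-1}+U_{\A}U_{\B}^{-1}\B$. On the other hand $E(\A,\B)w=\A w-w\B=\A U_{\A}U_{\B}^{-1}-\A-U_{\A}U_{\B}^{-1}\B+\B$. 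Adding the two, the terms involving $U_{\A}U_{\B}^{-1}$ cancel and one is left with $Xw+E(\A,\B)w=\B-\A=-(\A-\B)$, as desired.

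Since on each geodesic the transport equation is a linear ODE with prescribed value at the exit endpoint, its solution is unique, so $w=u^{\A-\B}$. Restricting to $\partial_{+}SM$ and using $U_{\A}|_{\partial_{+}SM}=C_{\A}$ together with $(U_{\B}^{-1})|_{\partial_{+}SM}=(U_{\B}|_{\partial_{+}SM})^{-1}=C_{\B}^{-1}$, we obtain $I_{E(\A,\B)}(\A-\B)=w|_{\partial_{+}SM}=C_{\A}C_{\B}^{-1}-\id$, which is \eqref{eq:pseudo-linearization}. There is no genuine analytic obstacle here; the only points requiring care are the bookkeeping of the endpoint/sign conventions in \eqref{eq:weightplus} and Definition \ref{def:aXray}, the smoothness and invertibility of $U_{\B}$ (guaranteed because it is $GL(n,\C)$-valued, cf.\ Lemma \ref{lemma:matrixintegrating}), and the remark that the vector-valued theory of Definition \ref{def:aXray} is being applied with $\C^{n}$ enlarged to $\C^{n\times n}$ so that $E(\A,\B)$ is a legitimate attenuation.
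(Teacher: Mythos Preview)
Your proof is correct and follows essentially the same approach as the paper: both define $W=U_{\A}U_{\B}^{-1}-\id$, verify that $W$ satisfies the transport equation $XW+E(\A,\B)W=-(\A-\B)$ with $W|_{\partial_{-}SM}=0$, and then read off the boundary values. You simply spell out the ``direct computation'' that the paper leaves to the reader.
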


\begin{proof} Consider the fundamental solutions for both $\A$ and $\B$, namely
\[\left\{\begin{array}{ll}
XU_{\A}+\A U_{\A}=0,\\
U_{\A}|_{\partial_{-}SM}=\id,\\
\end{array}\right.\]
and
\[\left\{\begin{array}{ll}
XU_{\B}+\B U_{\B}=0,\\
U_{\B}|_{\partial_{-}SM}=\id.\\
\end{array}\right.\]
Let $W:=U_{\A}U_{\B}^{-1}-\id$. A direct computation shows that
\[\left\{\begin{array}{ll}
XW+\A W-W\B=-(\A-\B),\\
W|_{\partial_{-}SM}=0.\\
\end{array}\right.\]
By definition of $I_{E(\A,\B)}$ we have
\[ I_{E(\A,\B)}(\A-\B)=W|_{\partial_{+}SM}\]
and since by construction $W|_{\partial_{+}SM}=C_{\A}C_{\B}^{-1}-\id$, the proposition follows.
\end{proof}

\begin{Remark} \label{remark:gaugeU}
Note that the function $U:=U_{\A}U_{\B}^{-1}$ satisfies
\[\left\{\begin{array}{ll}
\B=U^{-1}XU+U^{-1}\A U,\\
U|_{\partial_{-}SM}=\id.\\
\end{array}\right.\]
\end{Remark}

Using the identity given in Remark \ref{remark:gaugeU} we can establish when two attenuations $\A,\B\in C^{\infty}(SM,\C^{n\times n})$ have the same
non-Abelian X-ray data:

\begin{Proposition} \label{proposition:gaugeU}
Let $(M,g)$ be a compact non-trapping manifold with strictly convex boundary.
Given $\A,\B\in C^{\infty}(SM,\C^{n\times n})$, we have $C_{\A}=C_{\B}$ if and only if
there exists a smooth $U:SM\to GL(n,\C)$ with $U|_{\partial SM}=\id$ and such that
\[\B=U^{-1}XU+U^{-1}\A U.\]
\end{Proposition}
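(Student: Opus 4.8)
The plan is to prove both directions, the ``if'' direction being essentially a computation and the ``only if'' direction following directly from the pseudo-linearization machinery already set up.

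\smallskip

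\textbf{The ``if'' direction.} Suppose $\B=U^{-1}XU+U^{-1}\A U$ for some smooth $U:SM\to GL(n,\C)$ with $U|_{\partial SM}=\id$. Let $U_\B$ be the fundamental solution of $XU_\B+\B U_\B=0$ with $U_\B|_{\partial_-SM}=\id$, and set $\widetilde U:=UU_\B$. I would compute $X\widetilde U + \A\widetilde U$: using the Leibniz rule for $X$ (a vector field, hence a derivation) and substituting $XU = U\B - \A U$ coming from the hypothesis, one gets $X\widetilde U = (XU)U_\B + U(XU_\B) = (U\B-\A U)U_\B + U(-\B U_\B) = -\A U U_\B = -\A\widetilde U$. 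Since $U|_{\partial_-SM}=\id$ and $U_\B|_{\partial_-SM}=\id$ we get $\widetilde U|_{\partial_-SM}=\id$, so by uniqueness $\widetilde U = U_\A$, the fundamental solution for $\A$. Restricting to $\partial_+SM$ and using $U|_{\partial SM}=\id$ gives $C_\A = U_\A|_{\partial_+SM} = (UU_\B)|_{\partial_+SM} = U_\B|_{\partial_+SM} = C_\B$.

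\smallskip

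\textbf{The ``only if'' direction.} Suppose $C_\A=C_\B$. Form $U:=U_\A U_\B^{-1}$ as in Remark \ref{remark:gaugeU}; that remark already records that $U$ satisfies $\B = U^{-1}XU + U^{-1}\A U$ and $U|_{\partial_-SM}=\id$. It remains to upgrade $U|_{\partial_-SM}=\id$ to $U|_{\partial SM}=\id$, i.e.\ to check that $U=\id$ also on $\partial_+SM$. But $U|_{\partial_+SM} = (U_\A U_\B^{-1})|_{\partial_+SM}$, and in the proof of the pseudo-linearization Proposition it is shown that $W=U_\A U_\B^{-1}-\id$ satisfies $W|_{\partial_+SM}=C_\A C_\B^{-1}-\id$, which vanishes precisely because $C_\A=C_\B$. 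Hence $U|_{\partial_+SM}=\id$, so $U|_{\partial SM}=\id$ and $U$ is the required gauge. Smoothness of $U$ on $SM$ is clear since $U_\A, U_\B$ are smooth and $U_\B$ takes values in $GL(n,\C)$.

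\smallskip

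I do not anticipate a genuine obstacle here: both directions reduce to the uniqueness of solutions to the transport ODE along geodesics (which holds since on each geodesic it is a linear ODE) together with bookkeeping of boundary values on $\partial_\pm SM$. The only point requiring a little care is that the gauge $U$ in Proposition \ref{proposition:gaugeU} must be $\id$ on \emph{all} of $\partial SM$, not just on $\partial_-SM$ as in Remark \ref{remark:gaugeU}; this is exactly where the hypothesis $C_\A=C_\B$ (as opposed to merely having some intertwining $U$) gets used, via the boundary identity $W|_{\partial_+SM}=C_\A C_\B^{-1}-\id$ from the proof of the pseudo-linearization identity.
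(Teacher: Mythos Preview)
Your ``if'' direction is correct and matches the paper's argument exactly.

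In the ``only if'' direction there is a genuine gap: your last sentence, ``Smoothness of $U$ on $SM$ is clear since $U_\A, U_\B$ are smooth and $U_\B$ takes values in $GL(n,\C)$,'' is not justified. The fundamental solutions $U_\A, U_\B$ are defined through the exit time $\tau$ (see the formula $U_+(x,v)=[C(x,v,\tau(x,v))]^{-1}$), and on a non-trapping manifold with strictly convex boundary the exit time $\tau$ is \emph{not} smooth on all of $SM$: it fails to be smooth at the glancing set $\partial_0 SM=\{(x,v)\in\partial SM:\ g(v,\nu)=0\}$. So $U_\A$ and $U_\B$ are smooth on $SM\setminus\partial_0 SM$ but in general not on $SM$, and their quotient need not be either. (Contrast this with Lemma~\ref{lemma:matrixintegrating}, where smoothness is obtained precisely by replacing $\tau$ with the exit time $\tau_0$ of a slightly larger manifold, which \emph{is} smooth on $SM$.)

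The paper closes this gap by an extra ingredient you did not use: once one knows $W=U_\A U_\B^{-1}-\id$ has \emph{zero boundary values on all of $\partial SM$} and solves the smooth transport equation $XW+\A W-W\B=-(\A-\B)$, one invokes the regularity result \cite[Proposition~5.2]{PSU12} to conclude that $W$ (hence $U$) is smooth on $SM$. Your argument correctly establishes $W|_{\partial SM}=0$; what is missing is the appeal to this regularity theorem for transport equations, which is exactly what upgrades ``smooth off the glancing set with zero boundary trace'' to ``smooth on $SM$.''
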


\begin{proof} If such a smooth function $U$ exists, then the function $V=UU_{\B}$ satisfies $XV+\A V=0$ and $V|_{\partial_{-}SM}=\id$ and thus $V=U_{\A}$ and consequently $C_{\A}=C_{\B}$. Conversely, if the non-abelian X-ray transforms agree, the function
$W$ in the proof of Proposition \ref{eq:pseudo-linearization} has zero boundary value and by \cite[Proposition 5.2]{PSU12} it must be smooth. Hence $U=W+\id$ is smooth and by Remark \ref{remark:gaugeU} it satisfies the required equation.
\end{proof}

\section{A factorization theorem from Loop groups}

The main new input in the proof of Theorem \ref{thm:main} is a well known factorization theorem for Loop Groups.
Let us state it precisely following the presentation in \cite[Chapter 8]{PS86}.

Let us denote by $LGL_{n}(\C)$ the set of all smooth maps $\gamma:S^{1}\to GL(n,\C)$. The set has a natural structure of an infinite dimensional Lie group as explained in \cite[Section 3.2]{PS86}. This group contains several subgroups which are relevant for us. We shall denote by $L^{+}GL_{n}(\C)$ the subgroup consisting of those
loops $\gamma$ which are boundary values of holomorphic maps
\[\gamma:\{z\in\C:\,\,|z|<1\}\to GL(n,\C).\]
We let $\Omega U_{n}$ denote the set of smooth loops $\gamma:S^{1}\to U(n)$ such that $\gamma(1)=\id$, where $U(n)$ denotes the unitary group.

The result we shall use is \cite[Theorem 8.1.1]{PS86}, the first of three well-known factorization theorems (the second is Birkhoff's factorization equivalent to the classification of holomorphic vector bundles over $S^{2}$).
A PDE-based proof of this result may also be found in \cite{D92}.

\begin{Theorem} Any loop $\gamma\in LGL_{n}(\C)$ can be factorized uniquely 
\[\gamma=\gamma_{u}\cdot \gamma_{+},\]
with $\gamma_{u}\in\Omega U_{n}$ and $\gamma_{+}\in L^{+}GL_{n}(\C)$. In fact, the product map
\[\Omega U_{n}\times L^{+}GL_{n}(\C)\to LGL_{n}(\C)\]
is a diffeomorphism.
\label{thm:PS}
\end{Theorem}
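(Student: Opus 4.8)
The final statement to prove is Theorem~\ref{thm:PS}, the Loop Group factorization theorem. Since the excerpt explicitly says this is \cite[Theorem 8.1.1]{PS86} and refers to a PDE-based proof in \cite{D92}, a full self-contained proof would be long; but I can sketch the natural plan.

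\medskip

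\textbf{The plan.} The factorization $\gamma = \gamma_u \cdot \gamma_+$ with $\gamma_u \in \Omega U_n$ and $\gamma_+ \in L^+GL_n(\C)$ is, at the level of the group, the infinite-dimensional analogue of the Gram--Schmidt / $QR$ decomposition (an invertible matrix is a product of a unitary matrix and an upper triangular one). The cleanest conceptual route is via a \emph{Grassmannian model}. To each loop $\gamma \in LGL_n(\C)$ one associates the closed subspace $W_\gamma := \gamma \cdot H_+$ of $H = L^2(S^1,\C^n)$, where $H_+ = \bigoplus_{k \geq 0} H_k$ is the subspace spanned by the nonnegative Fourier modes (a polarization of $H$). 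The key analytic fact is that multiplication by $\gamma$ differs from an operator preserving this polarization only by a compact (indeed smoothing, since $\gamma$ is smooth) correction, so $W_\gamma$ lies in the restricted Grassmannian $\mathrm{Gr}(H)$; this is where the smoothness hypothesis on $\gamma$ is used.

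\medskip

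\textbf{Key steps, in order.} First, I would set up the restricted Grassmannian $\mathrm{Gr}(H)$ and verify $\gamma \mapsto W_\gamma$ lands in it, using that the Fourier coefficients of a smooth loop decay rapidly, hence the off-diagonal blocks of the multiplication operator are trace-class. Second, I would prove the decomposition for the subgroup $\Omega U_n$ by showing that the orbit map $\Omega U_n \to \mathrm{Gr}(H)$, $\gamma_u \mapsto \gamma_u H_+$, is a diffeomorphism onto the connected component containing $H_+$ of the ``smooth'' Grassmannian; equivalently, $\Omega U_n$ is a model for this component. Third --- the heart of the matter --- given an arbitrary $\gamma \in LGL_n(\C)$, I would produce $\gamma_u \in \Omega U_n$ with $\gamma_u H_+ = \gamma H_+$; then $\gamma_u^{-1}\gamma$ preserves $H_+$, and a loop in $LGL_n(\C)$ preserving $H_+$ under multiplication must have only nonnegative Fourier modes, i.e.\ it extends holomorphically to the disc, so $\gamma_+ := \gamma_u^{-1}\gamma \in L^+GL_n(\C)$. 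The existence of $\gamma_u$ matching the flag data comes from solving a $\bar\partial$-type problem / a Riemann--Hilbert factorization, which is exactly the ``PDE-based proof'' alluded to in \cite{D92}: one writes $\gamma = \gamma_u \gamma_+$ as a boundary-value factorization problem and solves it using that the relevant Toeplitz/Hankel operator is invertible (no winding number obstruction precisely because we allow $\gamma_u$ to be unitary rather than plus-holomorphic). Fourth, I would establish \emph{uniqueness}: if $\gamma_u \gamma_+ = \gamma_u' \gamma_+'$ then $(\gamma_u')^{-1}\gamma_u = \gamma_+'\gamma_+^{-1}$ is simultaneously a loop into $U(n)$ with value $\id$ at $1$ and the boundary value of a holomorphic map into $GL(n,\C)$; a holomorphic map $\{|z|<1\} \to GL(n,\C)$ whose boundary values are unitary is constant (maximum principle applied to each matrix entry, or to $\|\gamma_+(z)v\|$ together with $\|\gamma_+(z)^{-1}v\|$), and the normalization at $1$ forces it to be $\id$. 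Fifth, to upgrade set-theoretic bijectivity to the diffeomorphism statement, I would check that the product map and its inverse are smooth maps of Fr\'echet (or Banach, after fixing a Sobolev scale) Lie groups --- smoothness of the product is immediate, and smoothness of the inverse follows from the inverse function theorem once one checks the derivative of the product map at the identity is the splitting $L\mathfrak{gl}_n(\C) = \Omega\mathfrak{u}_n \oplus L^+\mathfrak{gl}_n(\C)$, which is the Lie-algebra-level Gram--Schmidt / Hilbert transform decomposition and is manifestly a topological isomorphism.

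\medskip

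\textbf{Main obstacle.} The genuinely hard step is the third one: the \emph{existence} of the unitary factor, i.e.\ surjectivity of $\gamma \mapsto W_\gamma$ onto the component of $H_+$ in the restricted Grassmannian, equivalently solvability of the Riemann--Hilbert problem. Everything else (compactness estimates, the maximum-principle uniqueness argument, the Lie-algebra splitting) is soft. For existence one must either invoke the structure theory of $\mathrm{Gr}(H)$ --- showing $\Omega U_n$ acts transitively on the appropriate component --- or run the PDE argument of \cite{D92} directly, controlling a loop of $\bar\partial$-problems; the care needed is in the analytic regularity (showing the factors are genuinely \emph{smooth} loops, not merely $L^2$ or continuous) and in the absence of a topological obstruction, which is what distinguishes this from Birkhoff factorization where partial indices appear. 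Given that this is a classical theorem quoted verbatim from \cite{PS86} and \cite{D92}, in the paper itself the natural move is simply to cite those references rather than reproduce the argument; the sketch above indicates what such a proof entails.
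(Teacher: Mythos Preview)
Your assessment is exactly right: the paper does not prove this theorem at all. It is stated as \cite[Theorem 8.1.1]{PS86}, with a pointer to the PDE-based proof in \cite{D92}, and then used as a black box. So there is no ``paper's own proof'' to compare against; the authors' approach is precisely the one you anticipate in your final sentence --- cite and move on.

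Your sketch of what the underlying proof looks like is accurate and follows the Pressley--Segal Grassmannian model faithfully: the map $\gamma \mapsto \gamma H_+$ into the restricted Grassmannian, transitivity of $\Omega U_n$ on the relevant component for existence, the maximum-principle argument for uniqueness, and the Lie-algebra splitting for the diffeomorphism claim. You also correctly flag that the genuine analytic content sits in the existence step (surjectivity / Riemann--Hilbert solvability with smooth regularity). Nothing to correct here.
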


Before discussing the application of this result to our geometric setting a couple of remarks are in order.
Given a complex $n\times n$ matrix $A$ we shall denote by $A^{T}$, $\overline{A}$ and $A^{*}$, its transpose, its conjugate and its conjugate-transpose respectively. Given $\gamma\in LGL_{n}(\C)$, using the theorem above we may write uniquely $\gamma^T=\gamma_{u}\cdot \gamma_{+}$ and after taking transpose we have
$\gamma=\gamma_{+}^T\cdot \gamma_{u}^T$. Since $\gamma_{+}^T\in  L^{+}GL_{n}(\C)$ and
$\gamma_{u}^T\in \Omega U_{n}$, Theorem \ref{thm:PS} also gives that the product map
\[L^{+}GL_{n}(\C)\times \Omega U_{n}\to LGL_{n}(\C)\]
is a diffeomorphism. We may also consider the subgroup $L^{-}GL_{n}(\C)$ consisting of those
loops $\gamma$ which are boundary values of {\it anti-holomorphic} maps
\[\gamma:\{z\in\C:\,\,|z|<1\}\to GL(n,\C).\]
After conjugating, Theorem \ref{thm:PS} also gives that the product maps
\[\Omega U_{n}\times L^{-}GL_{n}(\C)\to LGL_{n}(\C),\;\;\;\;L^{-}GL_{n}(\C)\times \Omega U_{n}\to LGL_{n}(\C)\]
are diffeomorphisms.

Consider now a compact non-trapping surface $(M,g)$ with strictly convex boundary. It is well known that
such surfaces are diffeomorphic to a disc, cf. \cite{PSU20}. Thus after picking global isothermal coordinates
we may assume that $M$ is the unit disc in the plane and the metric has the form $e^{2\lambda}(dx_{1}^2+dx_{2}^{2})$ where $\lambda$ is a smooth real-valued function of $x=(x_{1},x_{2})$. This gives coodinates
$(x_{1},x_{2},\theta)$ on $SM=M\times S^{1}$, where $\theta$ is the angle between a unit vector and $\partial_{x_{1}}$.

We wish to use the factorization theorem for Loop Groups in the following form:

\begin{Theorem} Given a smooth map $R:SM\to GL(n,\C)$, there are smooth maps $U:SM\to U(n)$
and $F:SM\to GL(n,\C)$ such that $R=FU$ and $F$ is fibre-wise holomorphic with fibre-wise holomorphic inverse.
We may also factorize $R$ as $R=\tilde{F}\tilde{U}$ where $\tilde{U}:SM\to U(n)$ is smooth
and $\tilde{F}:SM\to GL(n,\C)$ is smooth, fibre-wise anti-holomorphic with fibre-wise anti-holomorphic inverse.
\label{thm:PSforuse}
\end{Theorem}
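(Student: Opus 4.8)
The plan is to apply Theorem \ref{thm:PS} pointwise in the base variable $x \in M$, using the fact that $SM = M \times S^1$ and that a smooth map $R : SM \to GL(n,\C)$ is the same thing as a smooth family $x \mapsto R(x, \cdot)$ of loops in $LGL_n(\C)$, i.e. a smooth map $M \to LGL_n(\C)$. Fibre-wise holomorphicity of a function $F$ on $SM$ corresponds exactly to $F(x,\cdot)$ lying in $L^+GL_n(\C)$ (its negative Fourier modes in $\theta$ vanish), and fibre-wise unitarity of $U$ means $U(x,\cdot)$ takes values in $U(n)$; so the decomposition $R(x,\cdot) = F(x,\cdot)\,U(x,\cdot)$ should come straight from the $L^+GL_n(\C)\times\Omega U_n$ form of the factorization derived just above the statement. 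The one subtlety is the normalization $\gamma(1) = \id$ built into $\Omega U_n$: given $R$, I would first factor $R(x,\cdot) = F_0(x,\cdot)\,U_0(x,\cdot)$ with $U_0(x,\cdot)\in\Omega U_n$ and $F_0(x,\cdot)\in L^+GL_n(\C)$, then note $R(x,1) = F_0(x,1)$ is itself a smooth map $M \to GL(n,\C)$, and absorb it: set $F(x,\cdot) := F_0(x,\cdot)$ and $U(x,\cdot) := U_0(x,\cdot)$ directly — actually $U_0$ as produced need not be all of $U(n)$-valued in a way that drops the normalization, so more precisely I would simply take $U = U_0$ (which is already $U(n)$-valued and smooth) and $F = F_0$, checking that $F_0$ has fibre-wise holomorphic inverse because $F_0(x,\cdot)^{-1} \in L^+GL_n(\C)$ too (the inverse of a loop extending holomorphically into the disc with values in $GL(n,\C)$ again extends holomorphically).

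The key steps, in order, are: (1) identify $C^\infty(SM, GL(n,\C))$ with $C^\infty(M, LGL_n(\C))$ using the product structure $SM = M\times S^1$ and isothermal coordinates, and record that "fibre-wise holomorphic" $\Leftrightarrow$ "values in $L^+GL_n(\C)$", "fibre-wise unitary" $\Leftrightarrow$ "values in $U(n)$"; (2) invoke the $L^+GL_n(\C)\times\Omega U_n \to LGL_n(\C)$ diffeomorphism from the discussion after Theorem \ref{thm:PS} to get, for each $x$, a unique factorization $R(x,\cdot) = F(x,\cdot)U(x,\cdot)$; (3) use that this product map is a \emph{diffeomorphism} to conclude the inverse map is smooth, hence $x \mapsto (F(x,\cdot), U(x,\cdot))$ is smooth as a map $M \to L^+GL_n(\C)\times \Omega U_n$, which after unwinding the identification means $F$ and $U$ are smooth on $SM$; (4) observe $F^{-1}$ is fibre-wise holomorphic since $L^+GL_n(\C)$ is a group; (5) for the anti-holomorphic version, either repeat with the $\Omega U_n \times L^-GL_n(\C)$ or $L^-GL_n(\C)\times\Omega U_n$ diffeomorphisms mentioned above, or simply apply the holomorphic case to $\overline{R}$ (or to $R$ composed with the orientation-reversing fibre involution $\theta \mapsto -\theta$) and take conjugates/pullbacks.

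The main obstacle — really the only non-formal point — is the smooth dependence on the base variable $x$ in step (3): one must make sure that the abstract statement "the product map is a diffeomorphism of infinite-dimensional Lie groups" genuinely yields joint smoothness of $F$ and $U$ as functions on the finite-dimensional manifold $SM$, not merely continuity or smoothness in $\theta$ for each fixed $x$. This is where the precise functional-analytic setup of $LGL_n(\C)$ as a Fr\'echet (or tame Fr\'echet) Lie group from \cite[Section 3.2]{PS86} is needed: a smooth curve (more generally a smooth map from a finite-dimensional manifold) into $LGL_n(\C)$ composed with the smooth inverse of the product diffeomorphism is again smooth, and a smooth map $M \to C^\infty(S^1, \cdot)$ unwinds to a smooth map $M\times S^1 \to \cdot$ by the exponential law for smooth maps into spaces of smooth functions. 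I would state this as the crux of the argument and cite \cite{PS86} (and possibly \cite{D92} for the PDE-based construction, which makes the smooth parameter dependence especially transparent), keeping the remaining bookkeeping brief.
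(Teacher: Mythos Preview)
Your proposal is correct and follows essentially the same route as the paper: regard $R$ as a smooth map $M\to LGL_n(\C)$ via $SM=M\times S^1$, apply the $L^+GL_n(\C)\times\Omega U_n\to LGL_n(\C)$ diffeomorphism pointwise, and use smoothness of the inverse diffeomorphism together with the exponential law to get joint smoothness of $F$ and $U$ on $SM$. The paper differs only in emphasis: it spells out the verification that $x\mapsto R(x,\cdot)$ is smooth into $LGL_n(\C)$ via a local chart computation with the matrix logarithm, whereas you invoke the exponential law more abstractly; conversely you make explicit that $F^{-1}$ is fibre-wise holomorphic because $L^+GL_n(\C)$ is a group, and your aside about the $\Omega U_n$ normalization $U(x,1)=\id$ is harmless (the theorem does not forbid it, and the paper simply ignores the point).
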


\begin{proof} We only do the proof for $F$ holomorphic (the anti-holomorphic case is entirely analogous).
We regard $R$ as map $R:M\times S^{1}\to GL(n,\C)$ and we claim that we have a 
smooth map 
\[
M \ni x \mapsto R(x,\cdot) \in LGL_{n}(\C).
\]
To prove this, fix $x_0 \in M$ and let $\rho_0 = R(x_0, \,\cdot\,)$. Following \cite[Section 3.2]{PS86}, we may consider a neighborhood $\rho_0 \mathcal{U}$ of $\rho_0$ in $LGL_n(\C)$ where $\mathcal{U} = \exp(C^{\infty}(S^1, \breve{U}))$ and $\breve{U}$ is a small neighborhood of the zero matrix in $\C^{n \times n}$. Now $x \mapsto R(x,\,\cdot\,)$ is smooth near $x_0$ if the map $x \mapsto \log(\rho(x_0)^{-1} R(x,\,\cdot\,))$, where $\log$ is the standard logarithm for matrices close to $\id$, is smooth near $x_0$ as a map from $\re^2$ to the topological vector space $C^{\infty}(S^1, \C^{n \times n})$. The last fact follows easily from the smoothness of $R$.

Using Theorem \ref{thm:PS} in the form that says that the map
\[L^{+}GL_{n}(\C)\times \Omega U_{n}\to LGL_{n}(\C)\]
is a diffeomorphism we may write for each $x\in M$, $R(x,\cdot)=F(x,\cdot)U(x,\cdot)$, where $U$ takes values in the unitary group and $F$ is fibre-wise holomorphic with fibre-wise holomorphic inverse.
Moreover, maps $M\ni x\mapsto F(x,\cdot)$ and $M\ni x\mapsto U(x,\cdot)$ are smooth and the theorem follows. 
\end{proof}

\section{Proof of Theorems \ref{thm:nonabelian} and \ref{thm:main}}

We start with an elementary lemma.

\begin{Lemma} Let $\B\in C^{\infty}(SM,\C^{n\times n})$. If $\B$ is skew-hermitian, i.e.\ $\B\in C^{\infty}(SM,\mathfrak{u}(n))$, and $\B\in \oplus_{k\geq -1}\Omega_{k}$, then $\B\in \Omega_{-1}\oplus\Omega_{0}\oplus \Omega_{1}$ and $\B^{*}_{-1}=-\B_{1}$ and $\B_{0}^{*}=-\B_{0}$.
\label{lemma:easy}
\end{Lemma}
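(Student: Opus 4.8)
The plan is to exploit the fact that pointwise conjugate-transposition $u \mapsto u^{*}$, acting on $\C^{n\times n}$-valued functions on $SM$, reverses the fibre-wise Fourier modes, sending the mode indexed by $k$ to the mode indexed by $-k$. To see this, first note that since $V$ is a real vector field on $SM$ one has $V(\bar f) = \overline{Vf}$ for scalar $f$; working entrywise, and using that transposition merely permutes matrix entries, this gives $V(u^{*}) = (Vu)^{*}$ for every $u \in C^{\infty}(SM,\C^{n\times n})$. Hence if $u \in \Omega_{k}$, i.e.\ $-iVu = ku$, then $-iV(u^{*}) = -k\,u^{*}$, so $u^{*} \in \Omega_{-k}$; thus $u \mapsto u^{*}$ maps $\Omega_{k}$ to $\Omega_{-k}$, and likewise $H_{k}$ to $H_{-k}$.

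Next I would expand $\B = \sum_{k \in \Z} \B_{k}$ with $\B_{k} \in \Omega_{k}$ (the series converging in $C^{\infty}$ as $\B$ is smooth), apply $*$ term by term to obtain $\B^{*} = \sum_{k} \B_{k}^{*}$ with $\B_{k}^{*} \in \Omega_{-k}$, and compare with the orthogonal decomposition $L^{2}(SM,\C^{n\times n}) = \bigoplus_{m} H_{m}$ to read off that the $m$-th Fourier mode of $\B^{*}$ equals $(\B_{-m})^{*}$ for every $m \in \Z$. Since $\B$ is skew-hermitian, $\B^{*} = -\B$, and equating modes gives $(\B_{-m})^{*} = -\B_{m}$ for all $m$. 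By hypothesis $\B_{k} = 0$ whenever $k \leq -2$, so for $m \geq 2$ we get $\B_{m} = -(\B_{-m})^{*} = 0$; together with the assumed vanishing of the negative modes this yields $\B = \B_{-1} + \B_{0} + \B_{1} \in \Omega_{-1} \oplus \Omega_{0} \oplus \Omega_{1}$. Finally, taking $m = 1$ and $m = 0$ in the relation $(\B_{-m})^{*} = -\B_{m}$ gives $\B_{-1}^{*} = -\B_{1}$ and $\B_{0}^{*} = -\B_{0}$, which are the two remaining claims.

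There is no serious obstacle here: the whole statement is a bookkeeping exercise with the fibre-wise Fourier decomposition. The one point I would make explicit is the mode-reversing property of conjugate-transposition, which is the only genuine input and which reduces to the elementary identity $V(u^{*}) = (Vu)^{*}$.
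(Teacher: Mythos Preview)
Your proof is correct and follows essentially the same approach as the paper's: expand $\B$ in Fourier modes, use that conjugate-transposition sends $\Omega_k$ to $\Omega_{-k}$, and compare the two decompositions of $\B^*=-\B$. The only difference is that you spell out the identity $V(u^*)=(Vu)^*$ justifying $\B_k^*\in\Omega_{-k}$, which the paper simply asserts.
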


\begin{proof} Expanding $\B$ in Fourier modes we may write $\B=\sum_{k\geq -1}\B_{k}$ and hence
\[\B^*=\left(\sum_{k\geq -1}\B_{k}\right)^{*}=\sum_{k\geq -1}\B_{k}^{*}, \qquad -\B =-\sum_{k\geq -1}\B_{k}.\]
Since $\B^* = -\B$ and $\B_{k}^*\in \Omega_{-k}$, the lemma follows.
\end{proof}

The next lemma is what makes the proof of Theorem \ref{thm:main} possible.

\begin{Lemma} Let $(M,g)$ be a compact non-trapping surface with strictly convex boundary. 
Let $\A\in C^{\infty}(SM,\mathfrak{gl}(n,\C))$ and assume $\A\in \oplus_{k\geq -1}\Omega_{k}$.
Let $R:SM\to GL(n,\C)$ be a smooth function solving
$XR+\A R=0$ (as given by Lemma \ref{lemma:matrixintegrating}) and consider the factorization
$R=FU$ given by Theorem \ref{thm:PSforuse}. Then
\[\B:=F^{-1}XF+F^{-1}\A F\]
is skew-hermitian and $\B\in \Omega_{-1}\oplus\Omega_{0}\oplus\Omega_{1}$. In other words $\B$ determines a pair $(B,\Psi)$ with $B\in \Omega^{1}(M,\mathfrak{u}(n))$ and
$\Psi\in C^{\infty}(M,\mathfrak{u}(n))$.
\label{lemma:miracle}
\end{Lemma}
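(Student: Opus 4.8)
The plan is to extract two pieces of information about $\B$: first that it is skew-hermitian, and second that it lives in $\oplus_{k\ge -1}\Omega_k$. Once both are known, Lemma \ref{lemma:easy} immediately gives $\B\in\Omega_{-1}\oplus\Omega_0\oplus\Omega_1$ together with the stated relations between Fourier modes, and the fact that $\Omega_{-1}\oplus\Omega_1$ corresponds to a $1$-form $B$ and $\Omega_0$ to a function $\Psi$ (valued in $\mathfrak u(n)$ after the symmetry) is standard: a $\mathfrak u(n)$-valued $1$-form on $M$ pulls back to a function in $\Omega_{-1}\oplus\Omega_1$ on $SM$ and conversely, while $\Omega_0$ consists of pullbacks of functions on $M$. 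So the two bullet-point facts above are the whole content.

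For skew-hermiticity: since $R=FU$ with $U$ unitary, we have $F=RU^{-1}=RU^*$, and I would compute $\B$ directly. Write $\B = F^{-1}XF + F^{-1}\A F$. Using $R=FU$ and $XR+\A R=0$, one gets $XF\cdot U + F\cdot XU + \A F U = 0$, hence $F^{-1}XF + F^{-1}\A F = -(XU)U^{-1}$, i.e. $\B = -(XU)U^{-1}$. Now because $U$ takes values in $U(n)$, $U^*U=\id$, so differentiating along $X$ gives $(XU)^*U + U^*(XU)=0$; combined with $\B=-(XU)U^{-1}=-(XU)U^*$ this yields $\B^* = -U(XU)^* = U\cdot U^*(XU)U^* = (XU)U^* \cdot$... let me be careful: $\B^* = -\overline{((XU)U^*)}^T = -U\,(XU)^* = -U\,(-U^*(XU)U^*) = (XU)U^* = -\B$. (The intermediate identity $(XU)^* = -U^*(XU)U^*$ is just the differentiated unitarity relation rearranged.) Hence $\B$ is skew-hermitian. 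This is the clean part.

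For the Fourier support: the hypothesis $\A\in\oplus_{k\ge -1}\Omega_k$ and the mapping properties $\eta_\pm:\Omega_m\to\Omega_{m\pm1}$, so that $X:\oplus_{k\ge 0}\Omega_k\to\oplus_{k\ge -1}\Omega_k$, are what we leverage. The point is that $F$ and $F^{-1}$ are both fibre-wise holomorphic, i.e. $F,F^{-1}\in\oplus_{k\ge 0}\Omega_k$ (this is exactly the content of Theorem \ref{thm:PSforuse}). Then $XF\in\oplus_{k\ge -1}\Omega_k$, and the product of two fibre-wise holomorphic matrix functions is fibre-wise holomorphic, so $F^{-1}XF\in\oplus_{k\ge -1}\Omega_k$ (holomorphic times $\oplus_{k\ge -1}$ stays in $\oplus_{k\ge -1}$); likewise $F^{-1}\A F\in\oplus_{k\ge -1}\Omega_k$ since $\A\in\oplus_{k\ge -1}\Omega_k$ is sandwiched between holomorphic factors. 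Hence $\B\in\oplus_{k\ge -1}\Omega_k$. Alternatively, and perhaps more slickly, use $\B=-(XU)U^{-1}$: one checks $\B=F^{-1}(XF+\A F - F\cdot(-(XU)U^{-1})\cdot\ )$... but the sandwich argument above is the transparent route.

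\textbf{Main obstacle.} The genuinely delicate point is the claim that products of fibre-wise holomorphic functions are fibre-wise holomorphic and, more to the point, that a fibre-wise holomorphic $F$ has a fibre-wise holomorphic inverse — but this has been handed to us by Theorem \ref{thm:PSforuse} (it is precisely why the Loop Group factorization, rather than a bare Gram-Schmidt, is needed: the $L^+GL_n(\C)$ factor consists of boundary values of holomorphic maps \emph{into} $GL_n(\C)$, so the inverse is automatically holomorphic). Granting that, the only thing to watch is bookkeeping with the Fourier-degree inequalities: $X$ drops the lower bound from $0$ to $-1$ but no further, and multiplying by something in $\oplus_{k\ge 0}$ cannot decrease the lower index, so the combined object stays in $\oplus_{k\ge -1}$. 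There is no hidden analytic difficulty; the whole lemma is a structural consequence of the factorization theorem plus the elementary commutation relations for $\eta_\pm$.
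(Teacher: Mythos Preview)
Your proof is correct and follows essentially the same route as the paper: derive $\B=-(XU)U^{-1}$ from $XR+\A R=0$ and $R=FU$, use unitarity of $U$ to get skew-hermiticity, use holomorphicity of $F,F^{-1}$ together with the mapping property $X:\oplus_{k\ge 0}\Omega_k\to\oplus_{k\ge -1}\Omega_k$ to place $\B$ in $\oplus_{k\ge -1}\Omega_k$, and then invoke Lemma~\ref{lemma:easy}. The only cosmetic difference is that the paper checks skew-hermiticity via $((XU)U^{-1})^*=U\,X(U^{-1})=-(XU)U^{-1}$ in one line, whereas you expand $(XU)^*$ using the differentiated relation $U^*U=\id$; the content is identical.
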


\begin{proof} Let us differentiate the equation $R=FU$ along the geodesic flow to obtain
\[0 = XR + \A R=(XF) U+FXU + \A FU.\]
Writing $\B:=F^{-1}XF+F^{-1}\A F$, it follows that 
\begin{equation}
\B=-(XU)U^{-1}.
\label{eq:miracle}
\end{equation}
Since $U$ is unitary, we have $U^* = U^{-1}$ and 
\[
((XU)U^{-1})^*= U X(U^{-1}) = -(XU)U^{-1}.
\]
Thus $(XU)U^{-1}$ is skew-hermitian and by \eqref{eq:miracle} so is $\B$. Recall
that $X$ has the mapping property $X:\oplus_{k\geq 0}\Omega_{k}\to \oplus_{k\geq -1}\Omega_{k}$ and
hence since $F$ and $F^{-1}$ are holomorphic $F^{-1}XF\in  \oplus_{k\geq -1}\Omega_{k}$.
Similarly since we are assuming $\A\in \oplus_{k\geq -1}\Omega_{k}$, $F^{-1}\A F\in  \oplus_{k\geq -1}\Omega_{k}$. Thus $\B\in \oplus_{k\geq -1}\Omega_{k}$.
The lemma follows directly from \eqref{eq:miracle} and Lemma \ref{lemma:easy}.
\end{proof}

\begin{Remark}{\rm We can compute the pair $(B,\Psi)$ from the lemma quite explicitly as follows. The defining equation for $\B$ may be re-written as
\[XF+\A F-F\B=0.\]
If we recall that $X=\eta_{-}+\eta_{+}$ we can write the degree $0$ and $-1$ terms as
\[\eta_{-}F_{1}+\A_{-1} F_{1}+\A_{0}F_{0}-F_{1}\B_{-1}-F_{0}\B_{0}=0\]
and
\[\eta_{-}F_{0}+\A_{-1}F_{0}-F_{0}\B_{-1}=0.\]
From these two equations we can solve for $\B_{-1}$ and $\B_{0}$ in terms of $\A_{-1}, \A_{0}, F_{0}$ and $F_{1}$
since $F_{0}$ is easily checked to be invertible. It is interesting to observe that even if we start with $\A=\Phi\in\Omega_0$, so there is no reason for $\B$ to contain only a zero Fourier mode, in fact $\B_{-1}=0$ iff
$\eta_{-}F_{0}=0$ and it is not at all clear how to arrange $R$ for this to happen.

}
\end{Remark}

\begin{Remark}{\rm Since the decomposition $R=FU$ is unique (assuming $U(x,1)=\id$), this means that after fixing $R$ we have a well-defined transformation $\A\mapsto \B$. Once $R$ is fixed, any other smooth integrating factor has the form $RW$ where $W\in C^{\infty}(SM,GL(n,\C))$ is a first integral, i.e.\ $XW=0$.

}
\end{Remark}

We are now ready to prove the following fundamental result for the transport equation. As we already pointed out,
$X$ has the mapping property $X:\oplus_{k\geq 0}\Omega_{k}\to \oplus_{k\geq -1}\Omega_{k}$.
If $\A\in \oplus_{k\geq -1}\Omega_{k}$, the transport operator $X+\A$ retains this property
and the following attenuated version for systems of \cite[Proposition 5.2]{SU11} holds; compare also with \cite[Theorem 6.6]{PSU12}.

\begin{Theorem} Let $(M,g)$ be a simple surface. Let $\A\in C^{\infty}(SM,\mathfrak{gl}(n,\C))$ and assume $\A\in \oplus_{k\geq -1}\Omega_{k}$.
Let $u\in C^{\infty}(SM,\Cm^{n})$ be a smooth function such that $u|_{\partial SM}=0$ and
\[Xu+\A u=-f\in \oplus_{k\geq -1}\Omega_{k}.\]
Then $u$ is holomorphic.
\label{thm:holomorphic}
\end{Theorem}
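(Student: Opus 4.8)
The plan is to reduce Theorem~\ref{thm:holomorphic} for $\mathfrak{gl}(n,\C)$-valued attenuations to the already-known case of $\mathfrak{u}(n)$-valued attenuations, using the factorization machinery of Lemma~\ref{lemma:miracle}. First I would invoke Lemma~\ref{lemma:matrixintegrating} to produce a smooth integrating factor $R:SM\to GL(n,\C)$ with $XR+\A R=0$, and then apply Theorem~\ref{thm:PSforuse} to write $R=FU$ with $U:SM\to U(n)$ smooth and $F$ fibre-wise holomorphic with fibre-wise holomorphic inverse. By Lemma~\ref{lemma:miracle}, the conjugated attenuation $\B:=F^{-1}XF+F^{-1}\A F$ is skew-hermitian and lies in $\Omega_{-1}\oplus\Omega_0\oplus\Omega_1$, hence corresponds to a genuine pair $(B,\Psi)$ with $B\in\Omega^1(M,\mathfrak{u}(n))$ and $\Psi\in C^\infty(M,\mathfrak{u}(n))$.

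Next I would transport the transport equation through this conjugation. Setting $w:=F^{-1}u$, a direct computation using $XF^{-1}=-F^{-1}(XF)F^{-1}$ and $X(F^{-1}\A) \cdots$ — more cleanly, writing $Xu+\A u=-f$ and $XF+\A F=F\B$ — gives
\[
Xw+\B w = F^{-1}(Xu+\A u) + \big(XF^{-1}+F^{-1}\A\big)u = -F^{-1}f.
\]
Since $F^{-1}$ is fibre-wise holomorphic and $f\in\oplus_{k\geq-1}\Omega_k$, and since multiplication by a holomorphic function maps $\oplus_{k\geq -1}\Omega_k$ into itself (a holomorphic times a $\geq-1$ element is $\geq-1$), we get $-F^{-1}f\in\oplus_{k\geq-1}\Omega_k$. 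Also $w|_{\partial SM}=0$ because $F$ is invertible and $u|_{\partial SM}=0$. Thus $w$ satisfies exactly the hypotheses of the \emph{unitary} case of Theorem~\ref{thm:holomorphic}, which is \cite[Proposition 5.2]{SU11} (or \cite[Theorem 6.6]{PSU12}): therefore $w$ is fibre-wise holomorphic. Finally, $u=Fw$ is a product of two fibre-wise holomorphic functions, hence fibre-wise holomorphic, which is the claim.

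The step I expect to be the main obstacle — or at least the one needing the most care — is verifying that the conjugation by $F$ genuinely preserves the structural hypotheses on both sides: that $\B\in\oplus_{k\geq -1}\Omega_k$ (supplied by Lemma~\ref{lemma:miracle}, which itself hinges on the holomorphicity of $F,F^{-1}$ and the mapping property $X:\oplus_{k\geq 0}\Omega_k\to\oplus_{k\geq -1}\Omega_k$), and that $F^{-1}f$ remains in $\oplus_{k\geq -1}\Omega_k$. Both rely on the elementary but essential fact that the product of a fibre-wise holomorphic function (only nonnegative Fourier modes) with an element of $\oplus_{k\geq -1}\Omega_k$ stays in $\oplus_{k\geq -1}\Omega_k$; one should state this explicitly. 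A secondary point is confirming that the cited $\mathfrak{u}(n)$ result applies verbatim to the pair $(B,\Psi)$ arising here — but since Lemma~\ref{lemma:miracle} certifies $(B,\Psi)$ is an honest smooth skew-hermitian attenuation pair on the simple surface $M$, this is immediate. No new analysis (no Pestov identity, no holomorphic integrating factors) is needed at this stage; all the hard work has been pushed into Lemma~\ref{lemma:miracle} and the loop-group factorization.
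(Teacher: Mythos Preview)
Your approach is essentially identical to the paper's: conjugate by the holomorphic factor $F$ from Lemma~\ref{lemma:miracle} to reduce the transport equation to one with skew-hermitian attenuation $\B$, then invoke the known $\mathfrak{u}(n)$ result and pull holomorphicity back through $u=Fw$. The only cosmetic difference is that the paper, rather than citing a pre-packaged ``unitary version'' of the theorem, explicitly takes the negative Fourier part $q:=\sum_{k<0}(F^{-1}u)_k$, observes that $Xq+\B q\in\Omega_{-1}\oplus\Omega_0$, and applies \cite[Theorem~7.1]{PSU12} to conclude $q=0$; note that \cite[Proposition~5.2]{SU11} is the \emph{scalar} statement, so your citation for the system case should point to \cite{PSU12}.
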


\begin{proof} From $Xu+\A u=-f$, with $F$ and $\B$ as in Lemma \ref{lemma:miracle}, we deduce after a calculation
\begin{equation}
X(F^{-1}u)+\B(F^{-1}u)=-F^{-1}f
\label{eq:relat}
\end{equation}
and $F^{-1}u|_{\partial SM}=0$. Since $F^{-1}$ is holomorphic, it follows that
$F^{-1}f\oplus_{k\geq -1}\Omega_{k}$. Let
\[q:=\sum_{-\infty}^{-1}(F^{-1}u)_{k}.\]
Then
\[Xq+\B q\in \Omega_{-1}\oplus\Omega_{0}.\]
Since $q|_{\partial SM}=0$ and $\B$ is skew-hermitian, it follows from \cite[Theorem 7.1]{PSU12} (see the beginning of the proof of that theorem) that $q\in\Omega_{0}$, and thus $q=0$.
This implies that $F^{-1}u$ is holomorphic and hence $u=F(F^{-1}u)$ is also holomorphic.
\end{proof}

\begin{Remark} {\rm Note that \eqref{eq:relat} gives
\[I_{\A}(f)=FI_{\B}(F^{-1}f).\]
In principle, this identity together with the methods in \cite{MNP19} could be used to derive stability estimates for the linear problem and via Proposition \ref{eq:pseudo-linearization}, stability estimates for the non-linear problem
as well. Once a stability estimate is established, it is quite likely that the methods in \cite{MNP19} will also deliver a consistent inversion to the statistical inverse problem. We do not pursue this here.

}
\end{Remark}

We can now complete the proof of Theorem \ref{thm:main}.

\begin{proof}[Proof of Theorem \ref{thm:main}]
Consider an arbitrary attenuation pair $(A,\Phi)$, where $A\in \Omega^{1}(M,\mathfrak{gl}(n,\C))$ and
$\Phi\in C^{\infty}(M,\mathfrak{gl}(n,\C))$, and set $\A(x,v)=A(x,v)+\Phi(x)$.
 If $I_{A,\Phi}(f)=0$, by the regularity result \cite[Proposition 5.2]{PSU12} there is a smooth function $u$ such that $u|_{\partial SM}=0$ and
\begin{equation}
Xu+\A u=-f\in \Omega_{-1}\oplus\Omega_{0}\oplus\Omega_{1}.
\label{eq:inproof}
\end{equation}
Since $\A\in \Omega_{-1}\oplus\Omega_{0}\oplus\Omega_{1}$, Theorem \ref{thm:holomorphic} gives that
$u$ is holomorphic. Since the conjugates of both $\A$ and $f$ also belong to $\Omega_{-1}\oplus\Omega_{0}\oplus\Omega_{1}$, conjugating equation \eqref{eq:inproof} and applying Theorem \ref{thm:holomorphic}
again we deduce that $\bar{u}$ is also holomorphic. Thus $u=u_{0}$. If we now set $p:=-u_{0}$ we see that
$p|_{\partial M}=0$ and \eqref{eq:inproof} gives right away $F=\Phi p$ and $\alpha=dp+Ap$ as desired.
\end{proof}

\begin{proof}[Proof of Theorem \ref{thm:nonabelian}] From Proposition \ref{proposition:gaugeU} we know that $C_{A,\Phi}=C_{B,\Psi}$ means that there exists a smooth $U:SM\to GL(n,\C)$ such that $U|_{\partial SM}=\id$ and
\begin{equation} \label{eq:geq}
\B=U^{-1}XU+U^{-1}\A U,
\end{equation}
where $\B(x,v)=B_{x}(v)+\Psi(x)$.  We rephrase this information in terms of an attenuated
ray transform. If we let $W=U-\id$, then $W|_{\partial SM}=0$ and 
\[XW+\A W-W\B=-(\A-\B).\]
Hence $W$ is associated with the attenuated X-ray transform $I_{E(\A,\B)}(\A-\B)$
and if $C_{A,\Phi}=C_{B,\Psi}$, then this transform vanishes.
Note that $\A-\B\in\Omega_{-1}\oplus\Omega_{0}\oplus \Omega_{1}$.

Hence, making the choice to ignore the specific form $E(\A,\B)$, we can apply Theorem \ref{thm:main} to deduce that $W$ only depends on $x$.
Hence $U$ only depends on $x$ and if we set $u(x)=U_{0}$, then \eqref{eq:geq} easily translates into
$B=u^{-1}du+u^{-1}Au$ and $\Psi=u^{-1}\Phi u$ just by looking at the components of degree $0$ and $\pm 1$.
\end{proof}

\section{General Lie groups}\label{section:lieg} Let $(M,g)$ be a compact non-trapping surface with strictly convex boundary. Given an arbitrary Lie group $G$ with Lie algebra $\g$
and $\A\in C^{\infty}(SM,\g)$ we first explain how to make sense of the scattering data (see \cite{Hall,Wa} for  background on Lie groups and Lie algebras).  
If we let $L_{g}$ and $R_{g}$ denote left and right translation by $g$ in the group respectively, we observe
\[d(L_{g^{-1}})|_g:T_{g}G\to T_{e}G=\g.\]
Hence if we set
\[\omega^{L}_{g}(v):=d(L_{g^{-1}})|_g(v),\]
we see that $\omega^L\in \Omega^{1}(G,\g)$. The 1-form $\omega^L$ is called the left Maurer-Cartan 1-form of $G$.
If $G$ is a {\it matrix} Lie group (i.e.\ a closed subgroup of $GL(n,\C)$) then $\omega^L=g^{-1}dg$ where $dg$ is the derivative of the embedding $G \to GL(n,\C)$.
Using $R_{g}$ we can define similarly a right Maurer-Cartan
form $\omega^{R}_g:=d(R_{g^{-1}})|_g$ and for matrix Lie groups this is $(dg) g^{-1}$.

The matrix ODE that determines the non-Abelian X-ray transform may now be written in abstract terms as the unique solution $U:[0, \tau]\to G$ such that
\begin{equation}
 U^*\omega^R(\partial_t) + \A(\varphi_{t}(x,v)) = 0, \qquad U(\tau(x,v)) = e. 
\label{eq:ODEG}
\end{equation}
Thus $C_{\A}:\partial_{+}SM\to G$ is defined as $C_{\A}(x,v)=U(0)$. Note that the ODE may also be written
as $\dot{U}+dR_{U}|_{e}(\A)=0$.

As before, the gauge group $\mathcal G$ is given by those smooth $u:M\to G$
such that $u|_{\partial M}=e$. Given a pair $(A,\Phi)$ with $A\in\Omega^{1}(M,\g)$ and $\Phi\in C^{\infty}(M,\g)$ 
we have an action
\[ (A,\Phi)\cdot u=(u^*\omega^{L}+\text{Ad}_{u^{-1}}(A),\text{Ad}_{u^{-1}}(\Phi)),\]
where $\text{Ad}_g:\g \to \g$ is the Adjoint action (i.e.\ $\text{Ad}_g = d\Psi_g|_e$ where $\Psi_g: G \to G$, $\Psi_g(h) = g h g^{-1}$).
It is straightforward to check that for any $u\in \mathcal G$,
\[C_{(A,\Phi)\cdot u}=C_{A,\Phi}.\]

The main result of this section is:

\begin{Theorem} \label{thm:nonabelian2}
Let $(M,g)$ be a simple surface and let $G$ be an arbitrary Lie group with Lie algebra $\g$.
Suppose we are given pairs $(A,\Phi)$ and $(B,\Psi)$ with
$A,B\in \Omega^{1}(M,\g)$ and
$\Phi,\Psi\in C^{\infty}(M,\g)$. If
\[C_{A,\Phi}=C_{B,\Psi},\]
then there is $u\in \mathcal G$ such that $(A,\Phi)\cdot u=(B,\Psi)$.
\end{Theorem}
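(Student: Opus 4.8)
The plan is to reduce Theorem \ref{thm:nonabelian2} to the matrix case already handled in Theorem \ref{thm:nonabelian}. First I would invoke a faithful finite-dimensional representation to embed the problem into $GL(n,\C)$. For a general Lie group $G$ this requires some care: not every Lie group admits a faithful finite-dimensional representation. The standard fix is to pass to the universal cover or, more robustly, to observe that everything here is local in the following sense — the connection $A$, the Higgs field $\Phi$, and the gauge transformations $u$ with $u|_{\partial M} = e$ all take values near the identity component, and since $M$ is simply connected (a disc), the relevant structures can be pulled back. Concretely, I would let $\rho: \g \to \mathfrak{gl}(n,\C)$ be... no — instead, I would use that the ODE \eqref{eq:ODEG} and the gauge action only see the Lie algebra and the group multiplication in a neighborhood of $e$, and appeal to Ado's theorem to get a faithful representation of $\g$, hence an injective Lie algebra homomorphism $\rho: \g \hookrightarrow \mathfrak{gl}(n,\C)$ integrating to a local homomorphism, or to a homomorphism $\tilde{G} \to GL(n,\C)$ on the simply connected cover.

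The key steps, in order, would be: (1) Since $M$ is a disc, it is simply connected, so a connection $A \in \Omega^1(M,\g)$ with any fixed gauge can be viewed through the developing/holonomy picture; more simply, fix a faithful representation $\rho$ of $\g$ coming from Ado's theorem and set $\rho_*A \in \Omega^1(M,\mathfrak{gl}(n,\C))$, $\rho_*\Phi \in C^\infty(M,\mathfrak{gl}(n,\C))$. (2) Check that the scattering data transforms compatibly: the solution $U$ of \eqref{eq:ODEG} maps under $\rho$ (integrated to the simply connected cover $\tilde G$, which does admit a faithful representation by Ado, and then noting the solution starting at $e$ stays in the image) to the solution of the matrix ODE for $\rho_*\A$, so $C_{A,\Phi} = C_{B,\Psi}$ implies $C_{\rho_*A,\rho_*\Phi} = C_{\rho_*B,\rho_*\Psi}$. (3) Apply Theorem \ref{thm:nonabelian} to obtain a matrix gauge $\tilde u: M \to GL(n,\C)$ with $\tilde u|_{\partial M} = \id$ realizing the equivalence. (4) Show $\tilde u$ actually takes values in (the image of) $G$: since $\tilde u|_{\partial M} = \id$ and $M$ is connected, and $\tilde u$ solves the ODE $X\tilde u + \A \tilde u - \tilde u \B = 0$ with $\A, \B$ valued in $\rho_*\g$, a uniqueness argument for this transport equation along geodesics forces $\tilde u$ into the subgroup generated by $\exp(\rho_*\g)$, i.e.\ into $\rho(\tilde G)$; pushing back through $\rho^{-1}$ and using simple connectivity of $M$ to lift gives $u \in \mathcal G$.

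I would carry out step (4) by the integrating-factor argument already used in the paper: on each geodesic $\gamma_{x,v}$, $\tilde u$ satisfies a linear matrix ODE $\dot{\tilde u} = -(\rho_*\A)\tilde u + \tilde u(\rho_*\B)$ with $\tilde u = \id$ at the entry point on $\partial M$; since the right-hand side is of the form (left action of $\rho_*\g$) plus (right action of $\rho_*\g$), by uniqueness of ODE solutions $\tilde u(t)$ lies in $\exp(\rho_*\g)\cdots\exp(\rho_*\g)$, a subset of $\rho(\tilde G)$, for all $t$; as every point of $SM$ lies on such a geodesic, $\tilde u(SM) \subset \rho(\tilde G)$. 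Then $\rho^{-1}\circ \tilde u$ defines a smooth map $M \to \tilde G$ (using that $\rho|_{\tilde G}$ is a diffeomorphism onto its image, which it is by faithfulness and properness of $\tilde G$, or by working locally), and since $M$ is simply connected this descends to $u: M \to G$ with $u|_{\partial M} = e$. Finally one checks the gauge action \`{a} la Theorem \ref{thm:nonabelian} translates to $(A,\Phi)\cdot u = (B,\Psi)$ by examining Fourier degrees $0, \pm 1$ exactly as before.

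The main obstacle I expect is step (4) — or more precisely, ensuring that the matrix gauge transformation produced by Theorem \ref{thm:nonabelian} genuinely lands in the subgroup $\rho(\tilde G)$ rather than merely in $GL(n,\C)$, and then that it descends to the original group $G$ (not just its simply connected cover) as an honest element of $\mathcal G$. The descent is where the hypothesis that $M$ is a disc, hence simply connected, is essential: a map $M \to \tilde G/Z$ with trivial boundary value lifts to $\tilde G$ precisely because $\pi_1(M) = 0$. The use of Ado's theorem to guarantee a faithful representation of $\tilde G$ (which always exists on the Lie algebra level, and integrates on the simply connected cover) is the standard device that makes the reduction to the matrix case legitimate for an \emph{arbitrary} Lie group.
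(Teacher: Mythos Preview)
Your overall strategy matches the paper's: use Ado's theorem to pass to a matrix Lie group, apply Theorem \ref{thm:nonabelian} there, argue via an ODE that the resulting gauge lands in the subgroup, and then transfer back to $G$ using that $M$ is a disc. The paper organises this as Lemmas \ref{lemma:trivial}--\ref{lemma:top} together with Proposition \ref{prop:nonabelian3}. However, your execution has two genuine problems.

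First, step (2) has a real gap. You assert that $C_{A,\Phi}=C_{B,\Psi}$ in $G$ implies $C_{\rho_*A,\rho_*\Phi}=C_{\rho_*B,\rho_*\Psi}$ in $GL(n,\C)$, but this passes through equality of scattering data in the universal cover $\tilde G$, and that lifting step is not automatic. The ODE solutions $\tilde U^{\A}$, $\tilde U^{\B}$ in $\tilde G$ project under $p:\tilde G\to G$ to the solutions in $G$; equality of their endpoints in $G$ only says $\tilde C_{\A}(x,v)$ and $\tilde C_{\B}(x,v)$ differ by an element of the discrete kernel $\ker p$. The paper closes this gap (Lemma \ref{lemma:lesstrivial}) by a homotopy argument: the discrepancy varies continuously over $\partial_{+}SM$ and equals $e$ at the glancing set (where $\tau=0$ and both solutions are trivially $e$), hence is identically $e$ by connectedness. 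You do not supply this argument, and without it the implication fails.

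Second, your claim that the simply connected cover $\tilde G$ ``does admit a faithful representation by Ado'' and that ``$\rho|_{\tilde G}$ is a diffeomorphism onto its image'' is false in general: the universal cover of $SL(2,\R)$ has no faithful finite-dimensional representation. Ado's theorem gives a faithful representation of the Lie algebra, which integrates to a homomorphism $F:\tilde G\to H$ for a matrix Lie group $H$, but $F$ is in general only a \emph{covering map}, not an embedding. The paper handles this correctly and then uses the simple connectivity of $M$ once more (Lemma \ref{lemma:top}) to lift the $H$-valued gauge back through the covering $F$ to $\tilde G$, before descending to $G$. Your step (4) as written, relying on inverting $\rho$ globally, does not go through; it must be reformulated as a lifting argument through the covering $F$, which again uses $\pi_1(M)=0$. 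Once these two points are fixed, your outline coincides with the paper's proof.
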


\subsection{Matrix Lie groups} Let us first check that using Theorem \ref{thm:nonabelian} we can prove
Theorem \ref{thm:nonabelian2} for an arbitrary matrix Lie group. Namely:

\begin{Proposition} Let $(M,g)$ be a simple surface. Let $G$ be a matrix Lie group.
Suppose we are given pairs $(A,\Phi)$ and $(B,\Psi)$ with
$A,B\in \Omega^{1}(M,\g)$ and
$\Phi,\Psi\in C^{\infty}(M,\g)$. If
\[C_{A,\Phi}=C_{B,\Psi},\]
then there is $u\in \mathcal G$ such that $(A,\Phi)\cdot u=(B,\Psi)$.

\label{prop:nonabelian3}
\end{Proposition}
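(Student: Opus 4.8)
The idea is that a matrix Lie group $G\subset GL(n,\C)$ sits inside $GL(n,\C)$, so the scattering data of a $\g$-valued pair, computed intrinsically via \eqref{eq:ODEG}, coincides with the scattering data computed in $GL(n,\C)$ by viewing the pair as $\mathfrak{gl}(n,\C)$-valued. More precisely, I would first observe that for a matrix Lie group $\omega^R_g = (dg)g^{-1}$, so the abstract ODE $U^*\omega^R(\partial_t)+\A(\varphi_t(x,v))=0$, $U(\tau)=e$ becomes exactly the matrix ODE $\dot U + \A(\varphi_t(x,v))U = 0$, $U(\tau)=\id$ whose solution stays in $G$ (as noted in the introduction, citing \cite[Proposition 5.3 in Chapter 1]{Taylor}). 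Hence $C^G_{A,\Phi} = C^{GL(n,\C)}_{A,\Phi}$ as maps $\partial_+SM \to G \subset GL(n,\C)$, and similarly the gauge action $(A,\Phi)\cdot u=(u^*\omega^L+\mathrm{Ad}_{u^{-1}}(A),\mathrm{Ad}_{u^{-1}}(\Phi))$ for $u:M\to G$ coincides with the $GL(n,\C)$ gauge action $(u^{-1}du+u^{-1}Au, u^{-1}\Phi u)$ when $u$ is matrix-valued.

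\textbf{Main argument.} Given the hypothesis $C_{A,\Phi}=C_{B,\Psi}$ for $\g$-valued pairs, the preceding remark shows the equality also holds as $GL(n,\C)$-valued scattering data. By Theorem \ref{thm:nonabelian} there exists a smooth $u:M\to GL(n,\C)$ with $u|_{\partial M}=\id$ and $(A,\Phi)\cdot u=(B,\Psi)$, i.e. $B=u^{-1}du+u^{-1}Au$ and $\Psi=u^{-1}\Phi u$. The only thing left to check is that this $u$ actually takes values in $G$, so that it belongs to the gauge group $\mathcal G$ for $G$ and the identity becomes the claimed $\g$-valued gauge equivalence. For this I would go back to the proof of Proposition \ref{proposition:gaugeU}: the gauge is produced as $u(x)=W_0+\id$ where $W=U_\A U_\B^{-1}-\id$ lifted to $SM$, with $U_\A,U_\B$ the fundamental solutions of the $GL(n,\C)$ transport equations associated to $\A=A+\Phi$ and $\B=B+\Psi$. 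Since $A,\Phi,B,\Psi$ are $\g$-valued, these fundamental solutions take values in $G$ (the same ODE argument), hence $V:=U_\A U_\B^{-1}$ takes values in $G$; one then needs that $V$ depends only on $x$ (this is exactly what Theorem \ref{thm:main}/Theorem \ref{thm:nonabelian} established, since $W=V-\id$ was shown to depend only on $x$), and for fixed $x$ the loop $\theta\mapsto V(x,\theta)$ is constant, so its value $u(x)=V(x,\cdot)$ lies in $G$. Thus $u\in\mathcal G$ and $(A,\Phi)\cdot u=(B,\Psi)$ with all objects $\g$-valued.

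\textbf{Expected obstacle.} The only genuinely non-formal point is the verification that the gauge $u$ lands in $G$ rather than merely in $GL(n,\C)$; everything else is a dictionary translation between the intrinsic Lie-group formalism of Section \ref{section:lieg} and the matrix formalism of Sections 3--5. I would handle this either by the fundamental-solution argument above (using that $G$ is closed in $GL(n,\C)$, so a $G$-valued ODE solution cannot leave $G$), or, if one prefers to quote Theorem \ref{thm:nonabelian} as a black box, by the following alternative: $u|_{\partial M}=\id\in G$, and along any path $x(s)$ from $\partial M$ into $M$, the relation $u^{-1}du = B - u^{-1}Au$ shows $u(x(s))^{-1}\frac{d}{ds}u(x(s)) = B_{x(s)}(\dot x(s)) - \mathrm{Ad}_{u(x(s))^{-1}}A_{x(s)}(\dot x(s)) \in \g$, so $s\mapsto u(x(s))$ solves an ODE of the form $\dot g = dR_g|_e(\xi(s))$ with $\xi(s)\in\g$ and initial value in $G$; by closedness of $G$ the solution stays in $G$, hence $u(x)\in G$ for all $x$ since $M$ is connected. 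Either way the step is short; I expect no serious difficulty, and the Proposition follows.
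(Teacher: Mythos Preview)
Your proposal is correct and follows the paper's strategy exactly: embed $G\subset GL(n,\C)$, apply Theorem~\ref{thm:nonabelian} to get a $GL(n,\C)$-valued gauge $u$, and then verify $u$ actually lands in $G$. Your first verification (tracing the construction $u=U_\A U_\B^{-1}$ and noting both fundamental solutions are $G$-valued since $\A,\B$ are $\g$-valued) is a perfectly valid shortcut and is in fact slightly more direct than what the paper does.

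Your alternative verification, however, has a small circularity as written: asserting $u^{-1}\dot u = B - \mathrm{Ad}_{u^{-1}}(A)\in\g$ already presupposes $u\in G$, since conjugation by an element of $GL(n,\C)\setminus G$ need not preserve $\g$. The paper avoids this by instead writing the relation as $\dot u = uB - Au$ and observing that the (time-dependent) vector field $Y(g,t):=d(L_g)|_e(B)-d(R_g)|_e(A)=gB-Ag$ is tangent to $G$ whenever $g\in G$; then the $G$-valued solution of $\dot g = Y(g,t)$, $g(0)=e$, must coincide with $u(\gamma(t))$ by ODE uniqueness in $GL(n,\C)$. If you rephrase your second argument this way, it matches the paper verbatim.
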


\begin{proof} Since $G$ is a subgroup of $GL(n,\C)$ we see that $\g\subset \mathfrak{gl}(n,\C)$.
Thus by Theorem \ref{thm:nonabelian} there is $u:M\to GL(n,\C)$ such that $u|_{\partial M}=\id$
and $u\cdot (A,\Phi)=(B,\Psi)$. We only need to check that under these conditions $u$ takes values in fact
in $G$. The gauge equivalence gives (with $e=\id$)
\[du=uB-Au=d(L_{u})|_{e}(B)-d(R_{u})|_{e}(A)\]
and note that since $A$ and $B$ take values in $\g$, for $g\in G$, $d(L_{g})|_e(B)-d(R_{g})|_e(A)\in T_{g}G$.
Fix $x\in M$ and take any curve $\gamma:[0,1]\to M$ connecting $\gamma(0)\in \partial M$
and $\gamma(1)=x$.
Let
\[Y(g,t):=d(L_{g})|_e(B_{\gamma(t)}(\dot{\gamma}(t))-d(R_{g})|_e(A_{\gamma(t)}(\dot{\gamma}(t))\in T_{g}G.\]
This is clearly a time-dependent vector field in $G$. Thus there is a unique solution $g(t)$ to the ODE in $G$,
$\dot{g}=Y(g(t),t)$ with $g(0)=e$. Since $u(\gamma(t))$ solves the same ODE with the same initial condition
we see that $u(x)=g(1)\in G$ as desired.
\end{proof}

\subsection{Lie group coverings} Let us now discuss the behaviour of the scattering data under coverings, as this will prove quite useful for the proof of Theorem \ref{thm:nonabelian2}.

Suppose we have a Lie group covering map
$p:\G\to G$ and $\A,\B\in C^{\infty}(SM,\g)$. Both Lie groups have the same Lie algebra, $p$ is a Lie group homomorphism and $dp|_e:T_{e}\G\to T_{e}G$ realizes the identification between Lie algebras, thus $\A,\B$ can be considered as infinitesimal data for both $G$ and $\G$ (henceforth we will not distinguish between
$\A$ and $(dp|_e)^{-1}(\A)$).

\begin{Lemma} Let $C_{\A}$ denote the scattering data of $G$ and $\widetilde{C}_{\A}$ the scattering data of $\G$. Then $p\,\widetilde{C}_{\A}=C_{\A}$.
\label{lemma:trivial}
\end{Lemma}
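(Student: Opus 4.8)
The plan is to unravel the two defining ODEs for the scattering data on $G$ and on $\G$ and to show that projecting the $\G$-solution via $p$ gives the $G$-solution. Concretely, fix $(x,v)\in\partial_+ SM$ and let $\widetilde U:[0,\tau(x,v)]\to\G$ be the unique solution of \eqref{eq:ODEG} for the group $\G$, i.e. $\widetilde U^*\omega^R_{\G}(\partial_t)+\A(\varphi_t(x,v))=0$ with $\widetilde U(\tau)=e_{\G}$. Set $U:=p\circ\widetilde U:[0,\tau(x,v)]\to G$; since $p$ is a Lie group homomorphism with $p(e_{\G})=e_G$, we have $U(\tau)=e_G$, so $U$ has the correct final condition.

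The key step is then to verify that $U$ solves the $G$-version of \eqref{eq:ODEG}. For this one uses naturality of the right Maurer--Cartan form under the homomorphism $p$: because $p$ intertwines right translations, $R_{p(g)}\circ p = p\circ R_g$, differentiating at the identity gives $dp|_e\circ dR_g|_{e_{\G}} = dR_{p(g)}|_{e_G}\circ dp|_e$, which is exactly the statement $p^*\omega^R_G=\omega^R_{\G}$ after the identification $dp|_e:T_{e}\G\xrightarrow{\sim} T_e G$ of the Lie algebras (under which we do not distinguish $\A$ from $(dp|_e)^{-1}\A$). Chasing differentials, $U^*\omega^R_G(\partial_t) = (\omega^R_G)_{U(t)}(\dot U(t)) = (\omega^R_G)_{p(\widetilde U(t))}\big(dp(\dot{\widetilde U}(t))\big) = (p^*\omega^R_G)_{\widetilde U(t)}(\dot{\widetilde U}(t)) = (\omega^R_{\G})_{\widetilde U(t)}(\dot{\widetilde U}(t)) = \widetilde U^*\omega^R_{\G}(\partial_t) = -\A(\varphi_t(x,v))$. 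Hence $U$ satisfies $U^*\omega^R_G(\partial_t)+\A(\varphi_t(x,v))=0$ with $U(\tau)=e_G$, so by uniqueness of solutions to this ODE, $U$ is precisely the defining path for the $G$-scattering data. Evaluating at $t=0$ gives $C_{\A}(x,v)=U(0)=p(\widetilde U(0))=p\,\widetilde C_{\A}(x,v)$, and since $(x,v)\in\partial_+SM$ was arbitrary this is the claimed identity $p\,\widetilde C_{\A}=C_{\A}$.

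I do not expect any serious obstacle here; the statement is essentially the functoriality of the holonomy/development construction under a Lie group homomorphism. The only mild care needed is bookkeeping with the identification of the Lie algebras via $dp|_e$ and making sure the Maurer--Cartan naturality $p^*\omega^R_G=\omega^R_{\G}$ is used in the correct (right-invariant) form, since the ODE \eqref{eq:ODEG} is written using $\omega^R$; the left-invariant version would intervene only if one worked with the gauge action instead. One could alternatively phrase the whole argument in the equivalent form $\dot U + dR_U|_e(\A)=0$ noted after \eqref{eq:ODEG}, where the computation $dp|_{\widetilde U}\circ dR_{\widetilde U}|_{e_{\G}} = dR_{U}|_{e_G}\circ dp|_e$ is the single line that makes everything work.
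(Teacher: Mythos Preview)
Your proof is correct and follows exactly the paper's approach: the paper's one-line proof simply notes that $p\widetilde U=U$ because $p^*\omega=\widetilde{\omega}$ for the Maurer--Cartan forms, and you have unpacked this naturality computation in full detail (including the Lie algebra identification via $dp|_e$) and then appealed to uniqueness of the ODE solution, which is precisely what the paper leaves implicit.
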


\begin{proof} This is an immediate consequence of the fact that
the solutions $U:[0,\tau]\to G$ and $\widetilde{U}:[0,\tau]\to \G$ to the ODEs are related by
$p\widetilde{U}=U$ since for the Maurer-Cartan forms we have $p^*\omega=\widetilde{\omega}$.
\end{proof}

Next we show:

\begin{Lemma} Let a covering $p:\G \to G$ be given. Then $C_{\A}=C_{\B}$ implies
$\widetilde{C}_{\A}=\widetilde{C}_{\B}$.
\label{lemma:lesstrivial}

\end{Lemma}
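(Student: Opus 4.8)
The plan is to lift the whole problem along the covering map $p:\G\to G$ and exploit the fact that a covering map is a local diffeomorphism, so that the scattering data of $\G$ is determined, at least locally in time along each geodesic, by that of $G$. First I would fix a geodesic $\gamma_{x,v}$ with $(x,v)\in\partial_+SM$ and consider the two solutions $U_{\A},U_{\B}:[0,\tau(x,v)]\to G$ of the defining ODE \eqref{eq:ODEG} for $\A$ and $\B$ respectively, together with their lifts $\widetilde U_{\A},\widetilde U_{\B}:[0,\tau]\to\G$ (which exist and are unique by the path-lifting property of covering spaces, starting from $e\in\G$ at $t=\tau$). By Lemma \ref{lemma:trivial}, $p\widetilde U_{\A}=U_{\A}$ and $p\widetilde U_{\B}=U_{\B}$. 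The hypothesis $C_{\A}=C_{\B}$ gives $U_{\A}(0)=U_{\B}(0)$ in $G$, and I want to upgrade this to $\widetilde U_{\A}(0)=\widetilde U_{\B}(0)$ in $\G$.

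The key step is to compare $\widetilde U_{\A}$ and $\widetilde U_{\B}$ directly. The cleanest way is to use the gauge/quotient structure: from Remark \ref{remark:gaugeU} (applied over a slightly larger surface $M_0$ so everything is smooth, or directly along each geodesic) one has a smooth $U:SM\to G$ with $U|_{\partial_-SM}=e$ and $\B=U^*\omega^L+\mathrm{Ad}_{U^{-1}}(\A)$ when $C_{\A}=C_{\B}$; actually, more elementarily, I would argue along each geodesic separately. Along $\gamma_{x,v}$, consider the $G$-valued curve $g(t):=U_{\A}(t)U_{\B}(t)^{-1}$. A direct computation from the two ODEs shows $g$ satisfies an ODE of the form $\dot g = Z(g(t),t)$ for a smooth time-dependent vector field $Z$ on $G$, with $g(\tau)=e$ and $g(0)=U_{\A}(0)U_{\B}(0)^{-1}=e$ by hypothesis. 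Now lift: the vector field $Z$ lifts to a vector field $\widetilde Z$ on $\G$ via the local diffeomorphism $p$, the curve $\widetilde g(t):=\widetilde U_{\A}(t)\widetilde U_{\B}(t)^{-1}$ satisfies $\dot{\widetilde g}=\widetilde Z(\widetilde g(t),t)$ with $\widetilde g(\tau)=e$, and $p\widetilde g=g$. Since $g(0)=e$ and $p$ is a covering, $\widetilde g(0)$ lies in the discrete fibre $p^{-1}(e)$.

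To pin down that $\widetilde g(0)=e$ rather than some other point of the fibre, I would use a connectedness/homotopy argument: vary the geodesic. For the incoming geodesic at a fixed point moving into $M$ along the inward normal at a boundary point near where $\tau\to 0$, or rather, note that $\partial_+SM$ is connected and the map $(x,v)\mapsto \widetilde g_{x,v}(0)\in p^{-1}(e)$ is continuous (by smooth dependence of solutions of ODEs on parameters, together with smoothness of $\tau$ and of the lifts), hence locally constant, hence constant on $\partial_+SM$. On the part of $\partial_+SM$ where $\tau(x,v)$ is small — near the glancing region, or simply because for the doubled-up picture one can take $(x,v)$ with $\tau(x,v)\to 0$ — both $\widetilde U_{\A}$ and $\widetilde U_{\B}$ are close to $e$ and so $\widetilde g(0)$ is close to $e$, forcing $\widetilde g(0)=e$ there and hence everywhere. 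Therefore $\widetilde U_{\A}(0)=\widetilde U_{\B}(0)$, i.e. $\widetilde C_{\A}=\widetilde C_{\B}$.

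The main obstacle I anticipate is precisely this last rigidity step: ensuring the discrete ambiguity in the fibre $p^{-1}(e)$ is killed uniformly over all geodesics. The continuity-plus-connectedness argument handles it, but one must be a little careful that $\tau$ is genuinely smooth on $\partial_+SM$ (true since $M_0\supset M$ with strictly convex boundary makes $\tau|_{SM}$ smooth, cf.\ Lemma \ref{lemma:matrixintegrating}) and that there really is a neighbourhood of glancing directions where the lifts stay near $e$; alternatively one can avoid the glancing analysis altogether by extending to $M_0$ and running the argument with $\tau_0$, where for $(x,v)\in\partial_+SM_0$ with $(x,v)$ itself not yet entering $M$ the solution is $e$ on a nontrivial initial segment. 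Apart from that, everything is a routine transcription of the matrix computation in the proof of Proposition \ref{eq:pseudo-linearization} into the abstract Lie group language, using $p^*\omega^R=\widetilde\omega^R$ as in Lemma \ref{lemma:trivial}.
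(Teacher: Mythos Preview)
Your proposal is correct and follows essentially the same strategy as the paper: both kill the discrete ambiguity in the fibre $p^{-1}(e)$ by a continuity argument over the connected set $\partial_{+}SM$, anchored at the glancing region where the solutions are trivially $e$. The only cosmetic difference is packaging---the paper concatenates $U^{\A}_{x,v}$ with the reverse of $U^{\B}_{x,v}$ into a loop in $G$ and argues that this family of loops is contractible (hence lifts to a closed path), whereas you form the quotient $\widetilde g=\widetilde U_{\A}\widetilde U_{\B}^{-1}$ and note that its endpoint is a continuous map into the discrete fibre; these are two phrasings of the same idea.
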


\begin{proof} Let $U_{x,v}^{\A}:[0,\tau(x,v)]\to G$ denote the unique solution to the ODE \eqref{eq:ODEG} for $\A$ with $U(\tau)=e$. We use similar notation for
$\B$ and $\G$. If $C_{\A}=C_{\B}$, then for all $(x,v)\in \partial_{+}SM$, consider the concatenation of paths in $G$:
\[\Gamma(x,v):=U_{x,v}^{\A}*\text{Inv}(U_{x,v}^{\B}),\]
where $\text{Inv}$ indicates the path traversed in the opposite orientation. The path $\Gamma(x,v)$ is in fact
a closed loop in $G$, thanks to the assumption $C_{\A}=C_{\B}$. These loops depend continuously
on $(x,v)\in \partial_{+}SM$ and if $(x,v)$ is at the glancing (i.e. the region where $v \in T_x(\partial M)$) we get a constant path equal to the identity.
Hence $\Gamma(x,v)$ are all contractible in $G$ and thus the unique lifts $\widetilde{U}_{x,v}^{\A}$, $\widetilde{U}_{x,v}^{\B}$ must have the {\it same} end points. Thus $\widetilde{C}_{\A}=\widetilde{C}_{\B}$
as desired.
\end{proof}

The next lemma exploits the fact that $M$ is a disc.

\begin{Lemma} There exists $u:M\to G$ with $u|_{\partial M}=e$ and $(A,\Phi)\cdot u=(B,\Psi)$, iff
there is $\widetilde{u}:M\to\G$ with $\widetilde{u}|_{\partial M}=e$ and $(A,\Phi)\cdot widetilde{u}=(B,\Psi)$.
\label{lemma:top}
\end{Lemma}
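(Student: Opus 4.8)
The plan is to prove the two implications separately, using the fact that $M$ is diffeomorphic to a disc and hence simply connected. The forward direction is essentially trivial: given $u\colon M\to G$ with $u|_{\partial M}=e$ and $(A,\Phi)\cdot u=(B,\Psi)$, and a covering $p\colon\G\to G$, we simply lift $u$ to $\G$. Since $M$ is simply connected and $u|_{\partial M}=e$, the map $u$ lifts uniquely to a smooth $\widetilde u\colon M\to\G$ with $\widetilde u(x_0)=\widetilde e$ for a chosen basepoint $x_0\in\partial M$; because $\partial M$ is connected and $u|_{\partial M}$ is constant equal to $e$, the lift $\widetilde u$ is also constant equal to $\widetilde e$ on $\partial M$. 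It then remains to check that $(A,\Phi)\cdot\widetilde u=(B,\Psi)$, which follows because $dp|_e$ is the identity on the common Lie algebra $\g$ and the gauge action is defined purely in terms of $\omega^L$, $\mathrm{Ad}$ and the Lie bracket — all of which are intertwined by the local diffeomorphism $p$. Concretely, $\widetilde u^*\widetilde\omega^L = u^*\omega^L$ under the identification, and $\mathrm{Ad}_{\widetilde u^{-1}}=\mathrm{Ad}_{u^{-1}}$ on $\g$, so the gauge-transformed pairs agree.

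For the reverse direction, the key observation is that $p$ is a local diffeomorphism: given $\widetilde u\colon M\to\G$ with $\widetilde u|_{\partial M}=e$ realizing the gauge equivalence upstairs, we push it down by setting $u:=p\circ\widetilde u\colon M\to G$. Then $u$ is smooth, $u|_{\partial M}=p(e)=e$, and the same naturality of the Maurer-Cartan form and the Adjoint action under $p$ shows $(A,\Phi)\cdot u=(B,\Psi)$ downstairs. Here no topological input is needed at all beyond the fact that $p$ is a group homomorphism.

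The main point worth spelling out carefully — and the only place where the hypothesis that $M$ is a disc is genuinely used — is the existence and $\partial M$-triviality of the lift $\widetilde u$ in the forward direction. I would phrase this via the standard lifting criterion: since $\pi_1(M)=0$ and $u_*\pi_1(M,x_0)=0\subseteq p_*\pi_1(\G,\widetilde e)$, there is a unique continuous (hence smooth, as $p$ is a covering of Lie groups and $u$ is smooth) lift $\widetilde u$ with $\widetilde u(x_0)=\widetilde e$; and since $u(\partial M)=\{e\}$ the restriction $\widetilde u|_{\partial M}$ is a lift of a constant map from the connected set $\partial M$, hence constant, and equals $\widetilde e$ at $x_0$, so $\widetilde u|_{\partial M}\equiv\widetilde e$. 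I expect the main (mild) obstacle to be bookkeeping: making sure that the identification of the two Lie algebras via $dp|_e$ is applied consistently so that "$(A,\Phi)$" and "$(B,\Psi)$" literally denote the same data on both sides, and checking that each ingredient of the gauge action — $u^*\omega^L$, $\mathrm{Ad}_{u^{-1}}A$, $\mathrm{Ad}_{u^{-1}}\Phi$ — transforms correctly under $p$. None of this is deep, but it should be stated rather than waved at, since the whole utility of this lemma in the proof of Theorem \ref{thm:nonabelian2} is to shuttle the gauge between $G$ and a cover.
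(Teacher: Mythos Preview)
Your proposal is correct and follows essentially the same approach as the paper: lift $u$ through the covering using that $M$ is simply connected (with $\widetilde u|_{\partial M}=e$ because constant maps lift to constant maps on the connected boundary), and for the converse push $\widetilde u$ down via $u=p\circ\widetilde u$ using that $p$ is a homomorphism. You supply more detail than the paper on why the gauge action is intertwined by $p$, but the argument is the same.
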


\begin{proof} Since $M$ is simply connected, $u:M\to G$ has a unique lift
$\widetilde{u}:M\to \G$ with $u(x_{0})=e$ for some base point $x_{0}\in\partial M$.
Being a lift means $p\widetilde{u}=u$. Since constant paths lift to constant paths, we must have
$\widetilde{u}|_{\partial M}=e$. If $\widetilde{u}$ exists then $u:=p\widetilde{u}$ fulfills the requirements since
$p$ is a homomorphism.
\end{proof}

\begin{proof}[Proof of Theorem \ref{thm:nonabelian2}] By considering the connected component of $G$ we may assume without loss of generality that $G$ is connected.
By Ado's theorem and the strengthening explained in \cite[Conclusion 5.26]{Hall}, there exists a matrix Lie
group $H$ and a Lie algebra isomorphism $\phi:\g\to \mathfrak{h}$. Let $\G$ be the universal cover of $G$, so that $\G$ is a simply connected Lie group.
By the correspondence theorem between Lie groups and Lie algebras, there exists a unique
homomorphism $F:\G\to H$ such that $dF|_e=\phi$. Moreover, since $\phi$ is an isomorphism, the map $F$
is a covering map (cf. \cite[Chapter 3]{Wa}).

Suppose $C_{A,\Phi}=C_{B,\Psi}$ for $G$. Then by Lemma \ref{lemma:lesstrivial}, the same holds for $\G$
and by Lemma \ref{lemma:trivial} it also holds for the matrix Lie group $H$. By Proposition \ref{prop:nonabelian3}
there exists a smooth $q:M\to H$ such that $q|_{\partial M}=\id$ and $ (A,\Phi)\cdot q=(B,\Psi)$.
By Lemma \ref{lemma:top} the map $q$ gives rise to a smooth $u:M\to G$ such that $u|_{\partial M}=\id$ and $(A,\Phi)\cdot u=(B,\Psi)$ as desired.
\end{proof}

\end{document}